\documentclass[12pt, reqno]{amsart}
\usepackage{amsmath, amsthm, amscd, amsfonts, amssymb, graphicx, color}
\usepackage[bookmarksnumbered, colorlinks, plainpages]{hyperref}
\hypersetup{colorlinks=true,linkcolor=red, anchorcolor=green, citecolor=cyan, urlcolor=red, filecolor=magenta, pdftoolbar=true}

\textheight 22.5truecm \textwidth 14.5truecm
\setlength{\oddsidemargin}{0.35in}\setlength{\evensidemargin}{0.35in}

\setlength{\topmargin}{-.5cm}

\newtheorem{theorem}{Theorem}[section]
\newtheorem{lemma}[theorem]{Lemma}
\newtheorem{proposition}[theorem]{Proposition}
\newtheorem{corollary}[theorem]{Corollary}
\theoremstyle{definition}

\theoremstyle{remark}
\newtheorem{remark}[theorem]{Remark}
\numberwithin{equation}{section}

\allowdisplaybreaks

\renewcommand{\leq}{\leqslant}
\renewcommand{\geq}{\geqslant}
\renewcommand{\epsilon}{\varepsilon}

\begin{document}
\sloppy

\setcounter{page}{1}

\title[]{A new complemented subspace for the Lorentz sequence spaces, with an application to its lattice of closed ideals}

\author{Ben Wallis}
\address{
Division of Math/Science/Business,
Kishwaukee College,
Malta, IL 60150, United States
}
\email{\textcolor[rgb]{0.00,0.00,0.84}{benwallis@live.com}}

\subjclass[2010]{Primary 46B20; Secondary 46B45, 47L20.}

\keywords{Lorentz sequence spaces, complemented subspaces, lattice of closed ideals.}


\begin{abstract}
We show that every Lorentz sequence space $d(\textbf{w},p)$ admits a 1-complemented subspace $Y$ distinct from $\ell_p$ and containing no isomorph of $d(\textbf{w},p)$.  In the general case, this is only the second nontrivial complemented subspace in $d(\textbf{w},p)$ yet known.  We also give an explicit representation of $Y$ in the special case $\textbf{w}=(n^{-\theta})_{n=1}^\infty$ ($0<\theta<1$) as the $\ell_p$-sum of finite-dimensional copies of $d(\textbf{w},p)$.  As an application, we find a sixth distinct element in the lattice of closed ideals of $\mathcal{L}(d(\textbf{w},p))$, of which only five were previously known in the general case.
\end{abstract}

\maketitle

\section{Introduction}

Little is known about the complemented subspace structure of Lorentz sequence spaces $d(\textbf{w},p)$.  Until recently, the only nontrivial complemented subspace discussed in the literature was $\ell_p$ (\cite{ACL73}).  Then, in \cite{Wa20} it was shown that for certain weights $\textbf{w}$ (see Theorem \ref{NUC-sum} below), the space $d(\textbf{w},p)$ contains a 1-complemented subspace isomorphic to $(\bigoplus_{n=1}^\infty\ell_\infty^n)_p$.  Up to now, these were the only nontrivial complemented subspaces known to exist.

In this short note we show that each Lorentz sequence space admits a 1-complemented subspace $Y$ distinct from $\ell_p$ (\S2).  We also give an explicit representation of $Y$ for the case $\textbf{w}=(n^{-\theta})_{n=1}^\infty$ ($0<\theta<1$), as the $\ell_p$-sum of finite-dimensional copies of $d(\textbf{w},p)$ (\S3).  Finally, as an application we find a sixth distinct element in the lattice of closed ideals in the operator algebra $\mathcal{L}(d(\textbf{w},p))$, where only five were previously known in the general case (\S4).

Let's set up the main notation we need to use.  We begin by fixing $\mathbb{K}\in\{\mathbb{R},\mathbb{C}\}$.  Denote by $\Pi$ the set of all permutations of $\mathbb{N}$, and denote by $\mathbb{W}$ the set of all sequences $\textbf{w}=(w_n)_{n=1}^\infty\in c_0\setminus\ell_1$ satisfying
$$
1=w_1\geq w_2\geq w_3\geq\cdots>0.
$$
Fix $1\leq p<\infty$ and $\textbf{w}\in\mathbb{W}$.  For each $(a_n)_{n=1}^\infty\in\mathbb{K}^\mathbb{N}$ we set
$$
\left\|(a_n)_{n=1}^\infty\right\|_{d(\textbf{w},p)}:=\sup_{\pi\in\Pi}\left(\sum_{n=1}^\infty|a_{\pi(n)}|^pw_n\right)^{1/p},
$$
and let $d(\textbf{w},p)$ denote the linear space of all $(a_n)_{n=1}^\infty\in\mathbb{K}^\mathbb{N}$ with $\|(a_n)_{n=1}^\infty\|_{d(\textbf{w},p)}<\infty$ endowed with the norm $\|\cdot\|_{d(\textbf{w},p)}$, called a {\bf Lorentz sequence space}.  Recall that if $(a_n)_{n=1}^\infty\in c_0$ then there exists a ``decreasing rearrangement'' $(\hat{a}_n)_{n=1}^\infty$ of $(|a_n|)_{n=1}^\infty$.  In this case, the rearrangement inequality gives us
$$
\left\|(a_n)_{n=1}^\infty\right\|_{d(\textbf{w},p)}=\left(\sum_{n=1}^\infty\hat{a}_n^pw_n\right)^{1/p}\;\;\;\text{ for all }(a_n)_{n=1}^\infty\in c_0.
$$
Since $d(\textbf{w},p)\subset c_0$ as linear spaces (although not as normed spaces), this represents an alternative formulation of the Lorentz sequence space norm.

For each $i,k\in\mathbb{N}$ we define
$$
W_k:=\sum_{n=1}^kw_n\;\;\text{ and }\;\;w_i^{(k)}:=\frac{1}{W_k}\sum_{n=(i-1)k+1}^{ik}w_n
$$
and $\textbf{w}^{(k)}:=(w_i^{(k)})_{i=1}^\infty$.  It is readily apparent that $\textbf{w}^{(k)}\in\mathbb{W}$.  When $p$ is clear from context, we also set
$$
d_i^{(k)}:=\frac{1}{W_k^{1/p}}\sum_{n=(i-1)k+1}^{ik}d_n.
$$
It's routine to verify that $(d_i^{(k)})_{i=1}^\infty$ is a normalized basic sequence isometrically equivalent to the $d(\textbf{w}^{(k)},p)$ basis.  If necessary, we may sometimes abuse this notation; for instance, if $(j_k)_{k=1}^\infty$ is a sequence in $\mathbb{N}$, then we could write $((d_i^{(j_k)})_{i=1}^k)_{k=1}^\infty$ for appropriately-translated successive normalized constant-coefficient blocks of lengths $j_k$.

Our main tool for finding complemented subspaces of $d(\textbf{w},p)$ is the fact that every constant-coefficient block basic sequence of a symmetric basis spans a 1-complemented subspace (cf.\ e.g.\ \cite[Proposition 3.a.4]{LT77}).  We will use this well-known fact freely and without further reference.

\section{Lorentz sequence spaces contain at least two nontrivial complemented subspaces}

The first discovery of a nontrivial complemented subspace in $d(\textbf{w},p)$ came almost half a century ago, with the following result.

\begin{theorem}[{\cite[Lemma 1]{ACL73}}]\label{ACL-subsequence-lemma}
Fix $1\leq p<\infty$ and $\textbf{\emph{w}}\in\mathbb{W}$, and let
$$
x_i=\sum_{n=p_i}^{p_{i+1}-1}a_nd_n,\;\;\;i\in\mathbb{N},
$$
form a seminormalized block basic sequence in $d(\textbf{\emph{w}},p)$. If $a_n\to 0$ then $(x_i)_{i=1}^\infty$ admits a subsequence equivalent to $\ell_p$ and complemented in $d(\textbf{\emph{w}},p)$.
\end{theorem}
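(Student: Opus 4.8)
I would normalize so that $\|x_i\|_{d(\textbf{w},p)}=1$; this costs only a uniform constant, the block basis being seminormalized. The argument then splits into three parts: a free upper $\ell_p$-estimate, a lower $\ell_p$-estimate valid after passing to a subsequence, and the construction of a bounded projection.

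The upper estimate needs no hypothesis at all: the Lorentz norm satisfies an upper $p$-estimate with constant $1$, i.e.\ $\|\sum_k u_k\|^p\leq\sum_k\|u_k\|^p$ whenever the $u_k$ have pairwise disjoint supports. This is immediate from the rearrangement formula, since merging the nonzero moduli of the $u_k$ into a single decreasing sequence can only push each entry to a later, hence lighter, position. Applied to the blocks, $\|\sum_i\lambda_ix_i\|\leq\|(\lambda_i)\|_{\ell_p}$ for all scalars $(\lambda_i)$.

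For the lower estimate, which is where $a_n\to0$ enters, I would first extract a subsequence, relabelled $(x_i)_i$ with supports $I_i$ of lengths $\ell_i$, such that, writing $L_i:=\ell_1+\cdots+\ell_i$ and $L_0:=0$, one has for every $i$ that $\ell_i>L_{i-1}$ and $\epsilon_i^pW_{L_{i-1}}\leq 2^{-i}$, where $\epsilon_i:=\max_{n\in I_i}|a_n|$. This is possible because $a_n\to0$ forces $\epsilon_i\to0$, while the normalization together with $1=\|x_i\|^p\leq\epsilon_i^pW_{\ell_i}$ forces $\ell_i\to\infty$, both along the original block sequence. Two consequences: the entries of $x_i$ of rank at most $L_{i-1}$ contribute at most $\epsilon_i^pW_{L_{i-1}}\leq 2^{-i}$ to $\|x_i\|^p=1$; and the "rank windows" $(L_{i-1},\ell_i]$, $i\in\mathbb N$, are pairwise disjoint subsets of $\mathbb N$. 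Now fix $\lambda_i\geq0$ and put $z=\sum_i\lambda_ix_i$. Since $\|z\|^p\geq\sum_n|z_{\pi(n)}|^pw_n$ for every permutation $\pi$ of $\mathbb N$ — and hence $\|z\|^p\geq\sum_{s}|z_s|^pw_{\tau(s)}$ for every injection $\tau$ from a subset of $\operatorname{supp}(z)$ into $\mathbb N$ — I would feed it the injection that sends, for each $i$ and each rank $j\in(L_{i-1},\ell_i]$, the coordinate carrying the $j$-th largest modulus of $\lambda_ix_i$ to the integer $j$; this is injective precisely because the windows $(L_{i-1},\ell_i]$ are disjoint. It yields $\|z\|^p\geq\sum_i\lambda_i^p\bigl(1-2^{-i}\bigr)\geq\tfrac12\|(\lambda_i)\|_{\ell_p}^p$. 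Combined with the upper estimate, the subsequence is equivalent to the unit vector basis of $\ell_p$.

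For complementation I would build the projection $Py:=\sum_i\phi_i(y)x_i$, where $\phi_i\in d(\textbf{w},p)^*$ is a norming functional of $x_i$ supported on $I_i$ (such a $\phi_i$ exists by $1$-unconditionality of the symmetric basis — restrict any norming functional to $I_i$ — and the $\phi_i$ are then disjointly supported with $\|\phi_i\|_*=1$ and $\phi_i(x_j)=\delta_{ij}$). Then $Px_j=x_j$, and by the $\ell_p$-equivalence just established,
\[
\|Py\|\leq C\,\bigl\|(\phi_i(y))_i\bigr\|_{\ell_p}=C\sup_{\|\alpha\|_{\ell_{p'}}\leq 1}\Bigl|\Bigl(\sum_i\alpha_i\phi_i\Bigr)(y)\Bigr|\leq C\|y\|\sup_{\|\alpha\|_{\ell_{p'}}\leq 1}\Bigl\|\sum_i\alpha_i\phi_i\Bigr\|_*,
\]
so boundedness of $P$ comes down to an upper $p'$-estimate for the disjoint family $(\phi_i)$ in $d(\textbf{w},p)^*$; equivalently, to the assertion that $(\phi_i)$ is equivalent to the $\ell_{p'}$-basis (the reverse inequality being free from the lower $p'$-estimate of the dual space). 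I expect this dual estimate — equivalently, the boundedness of $P$ — to be the main obstacle. It is \emph{not} a formal consequence of $(x_i)\sim\ell_p$, since $d(\textbf{w},p)$ carries no lower $p$-estimate in general (already $\|\sum_{n\leq N}d_n\|^p=W_N$ while $\sum_{n\leq N}\|d_n\|^p=N$ and $W_N/N\to0$); rather, one must rerun an argument like the one above inside the dual Lorentz space, using that the coordinates of $\phi_i$, which are proportional to $|a_n|^{p-1}w_{\rho(n)}$ for $n\in I_i$ (with $\rho(n)$ the rank of $|a_n|$ within $x_i$), tend to $0$ across $i$ because $a_n\to0$ and $\textbf{w}\in c_0$ — possibly after a further passage to a subsequence. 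A more robust fallback, which also handles $p=1$ where the duality is awkward, would be to arrange by an additional subsequence and perturbation that the problem reduces to a constant-coefficient block basis, whose closed span is automatically $1$-complemented by the fact recalled in the introduction.
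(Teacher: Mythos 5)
The paper itself does not prove this statement; it is quoted from \cite[Lemma 1]{ACL73}, so your attempt has to be measured against that classical argument. The first two thirds of your proposal are correct and essentially standard: the constant-$1$ upper $p$-estimate for disjointly supported vectors is exactly the rearrangement observation recorded in Remark \ref{disjoint-vectors}, and your extraction of a subsequence with $\ell_i>L_{i-1}$ and $\epsilon_i^pW_{L_{i-1}}\leq 2^{-i}$, followed by the injection $\|z\|^p\geq\sum_s|z_s|^pw_{\tau(s)}$ applied to the disjoint rank windows $(L_{i-1},\ell_i]$, is a sound way to get the lower $\ell_p$-estimate. Up to that point the subsequence is indeed equivalent to the $\ell_p$ basis.

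The genuine gap is the complementation half, which you explicitly leave open, and your two suggestions for closing it both have problems. First, the heuristic that the coordinates of the full norming functionals $\phi_i$ tend to zero is false at $p=1$: there the coordinate of $\phi_i$ at the top-ranked position of block $i$ is $w_1=1$ for every $i$, and the projection $Py=\sum_i\phi_i(y)x_i$ built from full norming functionals is then genuinely unbounded — test it on $y_N=\sum_{i\leq N}d_{n_i}$, where $n_i\in I_i$ carries the largest coefficient of $x_i$: one gets $\|Py_N\|\geq cN$ while $\|y_N\|=W_N^{1/p}$ with $W_N/N\to0$ when $p=1$. Second, the fallback of perturbing to a constant-coefficient block basis is not viable: a block whose coefficients tend to zero is not a small perturbation of any constant-coefficient block, and no actual reduction is indicated. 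The repair, however, is already implicit in your lower-estimate argument: truncate each norming functional by deleting the coordinates at the $L_{i-1}$ top-ranked positions of block $i$, i.e.\ keep only the coordinates $\operatorname{sgn}(a_n)|a_n|^{p-1}w_{\rho_i(n)}$ with $\rho_i(n)\in(L_{i-1},\ell_i]$. Then $\phi_i(x_i)\geq 1-2^{-i}\geq\tfrac12$, and since the weight indices used across distinct blocks now lie in pairwise disjoint windows, H\"older on each block gives $\sum_i|\phi_i(y)|^p\leq\sum_i\sum_{n:\rho_i(n)>L_{i-1}}|y_n|^pw_{\rho_i(n)}\leq\|y\|^p$ by the same injection observation; hence $Py=\sum_i\phi_i(x_i)^{-1}\phi_i(y)x_i$ is a bounded projection for all $1\leq p<\infty$, including the case $p=1$ where your duality discussion breaks down. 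This truncation device is essentially how the cited proof of \cite{ACL73} proceeds, so without it your proposal establishes only the $\ell_p$-equivalence, not the complementedness.
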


\noindent By taking sufficiently long constant-coefficient blocks, it follows that $d(\textbf{w},p)$ contains a 1-complemented copy of $\ell_p$.  Much later was shown the following.

\begin{theorem}[{\cite[Theorem 4.3]{Wa20}}]\label{NUC-sum}
Let $1\leq p<\infty$ and $\textbf{\emph{w}}=(w_n)_{n=1}^\infty\in\mathbb{W}$.  If
$$
\inf_{k\in\mathbb{N}}\frac{\sum_{n=1}^{2k}w_n}{\sum_{n=1}^kw_n}=1
$$
then $d(\textbf{\emph{w}},p)$ admits a 1-complemented subspace spanned by constant-coefficient blocks and isomorphic to $(\bigoplus_{n=1}^\infty\ell_\infty^n)_p$.
\end{theorem}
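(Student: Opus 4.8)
The plan is to take for $Y$ the closed linear span of a suitable family of constant-coefficient blocks of $(d_n)$, arranged in disjointly supported groups: the $n$-th group will consist of $n$ successive normalized constant-coefficient blocks of a single length $k_n$, where $k_1<k_2<\cdots$ grows very fast and each $k_n$ is drawn from a set of ``good scales'' furnished by the hypothesis. Let $Y_n$ be the span of the $n$-th group; by symmetry of the Lorentz norm it is isometric to the span of the first $n$ basis vectors of $d(\textbf{w}^{(k_n)},p)$, and since every constant-coefficient block basic sequence of a symmetric basis spans a $1$-complemented subspace, $Y:=\overline{\operatorname{span}}\bigcup_n Y_n$ is automatically $1$-complemented in $d(\textbf{w},p)$. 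So it will be enough to arrange that (i) each $Y_n$ is $C$-isomorphic to $\ell_\infty^n$ for a constant $C$ independent of $n$, and (ii) the $Y_n$ sit inside $d(\textbf{w},p)$ in an $\ell_p$-fashion, that is, $\big\|\sum_n y_n\big\|^p$ is comparable to $\sum_n\|y_n\|^p$ for $y_n\in Y_n$; these together yield $Y\cong(\bigoplus_n Y_n)_p\cong(\bigoplus_n\ell_\infty^n)_p$.

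The first step is a lemma on the weights: $\inf_k W_{2k}/W_k=1$ forces $\inf_k W_{mk}/W_k=1$ for every fixed $m$, with the infimum attained in the limit $k\to\infty$. The mechanism is that $r_j:=w_{j+1}/W_j$ is non-increasing (since $w_{j+2}W_j\le w_{j+1}W_j\le w_{j+1}W_{j+1}$), so with $r_j':=\log(1+r_j)$ one has, for $s\ge 1$,
$$
\log\frac{W_{2^{s+1}k}}{W_{2^{s}k}}=\sum_{j=2^{s}k}^{2^{s+1}k-1}r_j'\;\le\;2^{s}k\,r_{2^{s}k}'\;\le\;2^{s}\sum_{j=k}^{2k-1}r_j'\;=\;2^{s}\log\frac{W_{2k}}{W_k},
$$
hence $\log(W_{2^{t}k}/W_k)\le(2^{t}-1)\log(W_{2k}/W_k)$. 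Since $W_{2k}/W_k$ is bounded away from $1$ for bounded $k$, the hypothesis gives $k_i\to\infty$ with $W_{2k_i}/W_{k_i}\to 1$, and then $W_{mk_i}/W_{k_i}\to 1$ on taking $2^{t}\ge m$. Thus, fixing $\epsilon_n\downarrow 0$ in advance, for each $n$ and each prescribed lower bound there is a ``good scale'' $k$ exceeding that bound with $W_{nk}/W_k<1+\epsilon_n$.

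Next I would choose the lengths $k_n$ inductively: having chosen $k_1<\cdots<k_{n-1}$, let $k_n$ be a good scale for $(n,\epsilon_n)$ large enough that also $\sum_{j<n}k_j\le\tfrac12 W_{k_n}$ and $\sum_{j<n}W_{k_j}\le W_{k_n}$ (possible because $W_k\to\infty$). Point (i) is then immediate: since $w_1^{(k_n)}=1$ and $\sum_{i=1}^n w_i^{(k_n)}=W_{nk_n}/W_{k_n}<1+\epsilon_n$, we get $\max_i|a_i|\le\|(a_i)_{i=1}^n\|_{d(\textbf{w}^{(k_n)},p)}\le(1+\epsilon_n)^{1/p}\max_i|a_i|$. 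The upper half of (ii) is also routine: for disjointly supported vectors the $p$-th power of the $d(\textbf{w},p)$-norm is subadditive --- in the global decreasing rearrangement every entry receives a rank at least as large as its rank within its own summand, and the weights are non-increasing --- so $\big\|\sum_n y_n\big\|^p\le\sum_n\|y_n\|^p$.

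The hard part will be the lower half of (ii). This is delicate because $d(\textbf{w},p)$ has no lower $p$-estimate in general (for instance $\|d_1+d_2\|^p=1+w_2<2$), so the rapid growth of the $k_n$ must be exploited. Here is the intended argument. Discarding coordinates only decreases the norm, so it suffices to bound below the norm of the vector $z$ obtained from $\sum_n y_n$ by keeping, in each summand $y_n$, only its ``top batch'': the $k_n$ coordinates of largest modulus, all equal to $a_{n,1}/W_{k_n}^{1/p}$ where $a_{n,1}=\max_i|a_{n,i}|$ (after a permutation inside $Y_n$). Thus $z$ is made up, over the finite support $F$ of the sum, of $k_n$ copies of $v_n:=a_{n,1}/W_{k_n}^{1/p}$ for each $n\in F$, and one wants $\|z\|^p\ge c\sum_{n\in F}v_n^pW_{k_n}$ for an absolute $c>0$ (the right-hand side being $\asymp\sum_n\|y_n\|^p$ by (i)). Ordering the values as $v_{\pi(1)}\ge v_{\pi(2)}\ge\cdots$, the decreasing rearrangement of $z$ concatenates the batches in that order, so
$$
\|z\|^p=\sum_{t}v_{\pi(t)}^p\big(W_{\sigma_t}-W_{\sigma_{t-1}}\big),\qquad \sigma_t:=\textstyle\sum_{s\le t}k_{\pi(s)},\ \ \sigma_0:=0.
$$
Let $R$ be the set of ``running maxima'', those $t$ with $k_{\pi(t)}=\max_{s\le t}k_{\pi(s)}$. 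For $t\in R$ all earlier lengths are strictly smaller than $k_{\pi(t)}$, hence by the first growth condition $\sigma_{t-1}\le\tfrac12 W_{k_{\pi(t)}}$; since $w_1=1\ge w_j$ gives $W_a\le a$, it follows that $W_{\sigma_t}-W_{\sigma_{t-1}}\ge W_{k_{\pi(t)}}-\sigma_{t-1}\ge\tfrac12 W_{k_{\pi(t)}}$, and therefore $\|z\|^p\ge\tfrac12\sum_{t\in R}v_{\pi(t)}^pW_{k_{\pi(t)}}$. On the other hand, each $t\notin R$ is charged to the last running maximum $r<t$ before it; then $k_{\pi(t)}<k_{\pi(r)}$ and $v_{\pi(t)}\le v_{\pi(r)}$, and for a fixed $r$ the lengths $k_{\pi(t)}$ over all such $t$ are distinct good scales below $k_{\pi(r)}$, so the second growth condition yields $\sum_{t:\,r\text{ charges }t}v_{\pi(t)}^pW_{k_{\pi(t)}}\le v_{\pi(r)}^pW_{k_{\pi(r)}}$. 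Summing over $r\in R$ gives $\sum_t v_{\pi(t)}^pW_{k_{\pi(t)}}\le 2\sum_{t\in R}v_{\pi(t)}^pW_{k_{\pi(t)}}\le 4\|z\|^p$, which is the required lower bound. Combining it with (i) and the upper estimate of (ii) shows that the natural map $(\bigoplus_n Y_n)_p\to d(\textbf{w},p)$ is an isomorphism onto $Y$, and since the $Y_n$ are uniformly isomorphic to the $\ell_\infty^n$ this completes the proof.
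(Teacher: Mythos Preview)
The paper does not give its own proof of this statement; Theorem~\ref{NUC-sum} is quoted verbatim from \cite[Theorem 4.3]{Wa20} as background for the new result (Theorem~\ref{nontrivial-complemented}), so there is no in-paper argument against which to compare your proposal.

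That said, your argument is correct and self-contained. The preliminary lemma---that $\inf_k W_{2k}/W_k=1$ implies $\inf_k W_{mk}/W_k=1$ for every fixed $m$---is proved cleanly via the monotonicity of $r_j=w_{j+1}/W_j$ and the telescoping estimate $\log(W_{2^tk}/W_k)\le(2^t-1)\log(W_{2k}/W_k)$. The inductive choice of scales $k_n$ with the two growth conditions is legitimate because $W_k\to\infty$ and the good scales are unbounded. Point~(i) is immediate from $\sum_{i=1}^n w_i^{(k_n)}=W_{nk_n}/W_{k_n}<1+\epsilon_n$, and the upper half of~(ii) is exactly the subadditivity recorded in Remark~\ref{disjoint-vectors}. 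The lower estimate via running maxima is the substantive step and checks out: for $t\in R$ you use $W_{\sigma_t}\ge W_{k_{\pi(t)}}$ (since $\sigma_t\ge k_{\pi(t)}$) and $W_{\sigma_{t-1}}\le\sigma_{t-1}\le\tfrac12 W_{k_{\pi(t)}}$ (first growth condition) to get $W_{\sigma_t}-W_{\sigma_{t-1}}\ge\tfrac12 W_{k_{\pi(t)}}$; the charging of each $t\notin R$ to its preceding running maximum $r$ is controlled by the second growth condition, since the lengths charged to $r$ are distinct and strictly below $k_{\pi(r)}$. Assembling the pieces yields the desired uniform equivalence with $(\bigoplus_n\ell_\infty^n)_p$, and $1$-complementation comes for free from the constant-coefficient block structure.
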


\noindent Thanks in large part to the ideas of William B.\ Johnson, our main result in this section is to generalize this to all Lorentz sequence spaces, as follows.

\begin{theorem}\label{nontrivial-complemented}
Let $1\leq p<\infty$ and $\textbf{\emph{w}}\in\mathbb{W}$.  Then there exists an increasing sequence $(N_k)_{k=1}^\infty\in\mathbb{N}^\mathbb{N}$ such that $((d_i^{(k)})_{i=1}^{N_k})_{k=1}^\infty$ spans a 1-complemented subspace $Y$ which contains no isomorph of $d(\textbf{\emph{w}},p)$ and which is not isomorphic to $\ell_p$.
\end{theorem}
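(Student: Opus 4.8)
The plan is to build $Y$ as an $\ell_p$-sum of the finite-dimensional constant-coefficient blocks $(d_i^{(k)})_{i=1}^{N_k}$, choosing the lengths $N_k$ to grow fast enough that $Y$ is essentially the $\ell_p$-sum $\bigl(\bigoplus_{k=1}^\infty E_k\bigr)_p$ where $E_k=\mathrm{span}(d_i^{(k)})_{i=1}^{N_k}$ is isometric to the $N_k$-dimensional truncation of $d(\textbf{w}^{(k)},p)$. The $1$-complementation of each block span is automatic from the cited fact about constant-coefficient blocks of a symmetric basis; the point is to arrange the $N_k$ so that the \emph{whole} sequence $((d_i^{(k)})_{i=1}^{N_k})_{k=1}^\infty$, laid end to end as successive blocks, still spans a $1$-complemented subspace and has the $\ell_p$-sum structure. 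First I would recall that successive normalized constant-coefficient blocks of lengths tending to infinity span a space on which the $d(\textbf{w},p)$-norm is equivalent (indeed, with the right choice of "gap" lengths, isometric up to $\epsilon$) to the $\ell_p$-sum of the $E_k$; this is the standard "blocks far apart behave like an $\ell_p$-sum" phenomenon, together with the averaging projection onto constant-coefficient blocks.

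Next I would verify the two negative properties. That $Y$ is not isomorphic to $\ell_p$: the spaces $E_k=d_{N_k}(\textbf{w}^{(k)},p)$ are $N_k$-dimensional Lorentz spaces, and as $k\to\infty$ the weights $\textbf{w}^{(k)}$ flatten out (each $w_i^{(k)}$ is an average of a long run of the $w_n$'s, so consecutive ratios $w_{i+1}^{(k)}/w_i^{(k)}\to 1$ in a controlled way), which forces $E_k$ to contain, uniformly in $k$, large well-complemented copies of $\ell_\infty^{m_k}$ with $m_k\to\infty$ — exactly the mechanism behind Theorem \ref{NUC-sum}. An $\ell_p$-sum of spaces containing uniformly complemented $\ell_\infty^{m_k}$'s with $m_k\to\infty$ cannot be isomorphic to $\ell_p$ (e.g.\ by a type/cotype or finite-dimensional-decomposition argument, or simply because $\ell_p$ does not contain uniformly complemented $\ell_\infty^m$'s for $m\to\infty$). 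That $Y$ contains no isomorph of $d(\textbf{w},p)$: since $Y=\bigl(\bigoplus_k E_k\bigr)_p$ with each $E_k$ finite-dimensional, any subspace of $Y$ either embeds into a finite sum $E_1\oplus\cdots\oplus E_m$ (hence is finite-dimensional) or, by a gliding-hump argument, contains for every $\epsilon$ a further subspace $(1+\epsilon)$-isomorphic to a block subspace of $\bigl(\bigoplus_k F_k\bigr)_p$ with $F_k\subset E_k$; such a block subspace is crudely finitely representable in $\ell_p$ together with the $E_k$'s, and one shows $d(\textbf{w},p)$ is not. Concretely, $d(\textbf{w},p)$ is not isomorphic to a subspace of any $\ell_p$-sum of finite-dimensional Lorentz spaces with flattening weights because $d(\textbf{w},p)$ has a symmetric basis whose fundamental function $W_n^{1/p}$ is genuinely sublinear in $n^{1/p}$ ($\textbf{w}\notin\ell_1$ but $\textbf{w}\in c_0$), whereas any infinite-dimensional subspace of $Y$ has — via the block structure — a copy of $\ell_p$ "spread out" across infinitely many $E_k$, and the obstruction is that $d(\textbf{w},p)$ is not isomorphic to $\ell_p$ and not isomorphic to a subspace of $c_0$-type pieces. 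The cleanest route: use the classical fact (Casazza–Lin, or Altshuler) that the only subsymmetric basic sequences in $d(\textbf{w},p)$ are (up to equivalence) the unit vector bases of $\ell_p$ and of $d(\textbf{w},p)$ itself, and show that $Y$, being an $\ell_p$-sum of uniformly "bad" finite-dimensional blocks, admits no subsymmetric sequence equivalent to the $d(\textbf{w},p)$-basis — any normalized block sequence in $Y$ that is subsymmetric must be equivalent to $\ell_p$.

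The key steps in order: (1) choose $N_k\uparrow\infty$ and, interleaved, gap-lengths large enough that the end-to-end blocks $((d_i^{(k)})_{i=1}^{N_k})_k$ are $2$-equivalent (say) to the unit vector basis of $\bigl(\bigoplus_k E_k\bigr)_p$ and span a $1$-complemented subspace $Y$ (averaging projection plus the far-apart-blocks estimate); (2) identify $E_k$ isometrically with the truncated Lorentz space $d_{N_k}(\textbf{w}^{(k)},p)$ and record that $\textbf{w}^{(k)}$ flattens: for fixed $m$, $\max_{i\le m} |1-w_i^{(k)}/w_1^{(k)}|\to 0$ as $k\to\infty$ (using $w_n\to 0$ and $\sum w_n=\infty$); (3) deduce $E_k$ contains a $1$-complemented (within $E_k$) copy of $\ell_\infty^{m_k}$ with $m_k\to\infty$, as in the proof of Theorem \ref{NUC-sum}, hence $Y\supset\bigl(\bigoplus_k\ell_\infty^{m_k}\bigr)_p$ complementably, so $Y\not\cong\ell_p$; (4) show every normalized block basic sequence of the natural basis of $Y$ is either "mostly supported in one $E_k$" (giving finite-dimensional or $\ell_\infty^m$-type pieces) or "spread", and in the spread case is equivalent to $\ell_p$ by a perturbation/gliding-hump argument — and since a subsymmetric sequence cannot be of the first type with $\ell_\infty^m$-blocks (subsymmetric $\Rightarrow$ unconditional with uniform constant, incompatible with long $\ell_\infty$ blocks unless it is $c_0$, excluded in $\ell_p$-sum), conclude the only subsymmetric sequences in $Y$ are $\ell_p$-equivalent, whence $d(\textbf{w},p)\not\hookrightarrow Y$ because $d(\textbf{w},p)$'s basis is subsymmetric and not $\ell_p$-equivalent (as $\textbf{w}\notin\ell_1$ forces $d(\textbf{w},p)\ne\ell_p$ isomorphically — indeed $d(\textbf{w},p)$ is not even isomorphic to a subspace of $\ell_p$, by the Casazza–Lin characterization of its subspaces). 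The main obstacle I expect is step (4): turning "$Y$ is an $\ell_p$-sum of finite-dimensional pieces with flattening weights" into a clean statement that no subspace of $Y$ is isomorphic to $d(\textbf{w},p)$ — one must handle block sequences that straddle many $E_k$'s and rule out that such a spread block sequence could reconstitute the $d(\textbf{w},p)$ norm; here the right tool is the exact description of subsymmetric sequences in $d(\textbf{w},p)$ (only $\ell_p$ and $d(\textbf{w},p)$, up to equivalence) combined with the fact that any $d(\textbf{w},p)$-embedding would yield a subsymmetric sequence in $Y$ not equivalent to $\ell_p$, which the block structure of $Y$ forbids, since the fundamental function of any normalized block sequence in $\bigl(\bigoplus_k E_k\bigr)_p$ grows like $n^{1/p}$ along arithmetic subsequences and cannot match the strictly slower growth $W_n^{1/p}$ of the $d(\textbf{w},p)$ basis.
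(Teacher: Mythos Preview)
There is a genuine error in step~(3). You claim that $\textbf{w}^{(k)}$ ``flattens'' (consecutive ratios $w_{i+1}^{(k)}/w_i^{(k)}\to 1$) and that this yields uniformly complemented copies of $\ell_\infty^{m_k}$ inside $E_k$; both assertions are wrong. If the first $m$ averaged weights are nearly equal then $[d_i^{(k)}]_{i=1}^m$ is close to $\ell_p^m$, not to $\ell_\infty^m$: the norm is $(\sum_i\hat a_i^p w_i^{(k)})^{1/p}$, which for nearly constant weights is essentially the $\ell_p^m$-norm. The mechanism behind Theorem~\ref{NUC-sum} is exactly the \emph{opposite} of flattening: the hypothesis $\inf_kW_{2k}/W_k=1$ means $w_2^{(k)}=W_{2k}/W_k-1\to 0$ along a subsequence, so the averaged weights \emph{steepen}, and that is what produces $\ell_\infty^m$-behavior. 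Worse, neither phenomenon occurs for all $\textbf{w}$: for $\textbf{w}=(n^{-\theta})$, Lemma~\ref{uniformly-equivalent-weights} gives $w_i^{(k)}\asymp w_i$ uniformly in $k$, so the $E_k$ are uniformly equivalent to initial segments of $d(\textbf{w},p)$ itself and contain neither uniform $\ell_\infty^m$'s nor become $\ell_p$-like. Your route to $Y\not\cong\ell_p$ therefore collapses. Step~(1) is also unjustified in general: the upper $\ell_p$-estimate for disjoint blocks is free (Remark~\ref{disjoint-vectors}), but the matching lower estimate is precisely what \S3 labors to establish, and only for the special weight $n^{-\theta}$; moreover the blocks in the statement are successive, so there is no room to ``insert gaps.''

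The paper's proof takes a different, shorter route and does not need the $\ell_p$-sum identification at all. For $Y\not\cong\ell_p$ with $1<p<\infty$ it invokes Corollary~\ref{no-uniformly-complemented-copies} (proved via an ultrapower argument and the Kadec--Pe\l{}czy\'nski dichotomy): the truncations $[d_n]_{n=1}^N$ of \emph{any} Lorentz space are not uniformly complemented in $\ell_p$. Since $(d_i^{(k)})_{i=1}^\infty$ is isometric to the canonical basis of $d(\textbf{w}^{(k)},p)$, one simply chooses $N_k$ so large that $E_k=[d_i^{(k)}]_{i=1}^{N_k}$ fails to be $k$-complemented in $\ell_p$; as each $E_k$ is $1$-complemented in $Y$, this rules out $Y\cong\ell_p$. (For $p=1$ the paper instead uses uniqueness of the unconditional basis of $\ell_1$.) For ``$Y$ contains no copy of $d(\textbf{w},p)$'' the paper's argument is also much simpler than your subsymmetric-sequence analysis: any seminormalized block of the $Y$-basis is a block of $(d_n)$ whose coefficients $W_k^{-1/p}$ tend to zero, so by Theorem~\ref{ACL-subsequence-lemma} it has a subsequence equivalent to $\ell_p$; a gliding-hump embedding of $(d_n)$ into $Y$ would then force $(d_n)$ itself to have an $\ell_p$-subsequence, which is impossible.
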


To prove it, we need a few preliminaries.

\begin{lemma}\label{subsymmetric-complement}
If $1<p<\infty$ then every complemented subspace of $L_p[0,1]$ with a subsymmetric basis $(x_n)_{n=1}^\infty$ is isomorphic to either $\ell_p$ or $\ell_2$.
\end{lemma}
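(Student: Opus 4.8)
The plan is to combine the Kadec--Pe\l czy\'nski dichotomy in $L_p$ with the defining feature of a subsymmetric basis --- its equivalence to all of its subsequences --- handling $p>2$ directly and deducing $1<p<2$ from it by duality. (The case $p=2$ needs no argument, since every infinite-dimensional subspace of $L_2$ is isomorphic to $\ell_2$.)

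For $p>2$: let $X\subseteq L_p[0,1]$ carry a subsymmetric basis, which is automatically seminormalized, so we may take it normalized, say $(x_n)_{n=1}^\infty$. Being a closed subspace of the reflexive space $L_p$, $X$ is reflexive, so $(x_n)$ is shrinking, hence weakly null. I would then quote the Kadec--Pe\l czy\'nski theorem: a normalized weakly null sequence in $L_p$ with $p>2$ has a subsequence $(x_{n_k})_{k=1}^\infty$ equivalent to the unit vector basis of $\ell_p$ or of $\ell_2$. Subsymmetry of $(x_n)$ forces $(x_n)$ to be equivalent to $(x_{n_k})_{k=1}^\infty$, and therefore $X=[x_n]$ is isomorphic to $\ell_p$ or to $\ell_2$.

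For $1<p<2$: now I would use that $X$ is complemented in $L_p$. Then $X^*$ is isomorphic to a complemented subspace of $L_p^*=L_{p'}[0,1]$, where $p'>2$; and since $(x_n)$ is shrinking (again by reflexivity), the coordinate functionals $(x_n^*)_{n=1}^\infty$ form a basis of $X^*$, which is again subsymmetric (unconditionality passes to the dual basis, and equivalence of $(x_n^*)$ with its subsequences follows from that of $(x_n)$ once one invokes the uniform boundedness of coordinate restrictions for an unconditional basis). Applying the $p'>2$ case already settled gives $X^*\cong\ell_{p'}$ or $X^*\cong\ell_2$, whence, $X$ being reflexive, $X\cong X^{**}$ is isomorphic to $(\ell_{p'})^*=\ell_p$ or to $(\ell_2)^*=\ell_2$.

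I expect the delicate point to be the $1<p<2$ reduction, and in particular the fact that complementedness cannot be dropped there: the Kadec--Pe\l czy\'nski dichotomy yields the tidy $\ell_p$-or-$\ell_2$ conclusion only for $p\geq 2$, and for $1<p<2$ the space $L_p$ contains, for each $p<q<2$, the subsymmetric $\ell_q$-basis --- normalized, weakly null, but spanning a subspace isomorphic to neither $\ell_p$ nor $\ell_2$. So complementedness must be exploited, and it does so exactly by allowing the passage to $X^*\subseteq L_{p'}$ with $p'>2$; the two bookkeeping checks --- that ``complemented'' and ``subsymmetric'' both survive dualization --- are routine but need to be recorded.
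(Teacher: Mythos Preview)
Your proof is correct and follows essentially the same route as the paper: Kadec--Pe\l czy\'nski plus subsymmetry for $p>2$, and a duality reduction via complementedness for $1<p<2$. The only cosmetic difference is that the paper avoids verifying that the dual basis is subsymmetric by applying Kadec--Pe\l czy\'nski directly to $(x_n^*)$ as a seminormalized basic sequence in $L_{p'}$ and then transferring the resulting subsequence equivalence back to $(x_n)$ via the subsymmetry of $(x_n)$ itself.
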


\begin{proof}
The case $p=2$ is trivial since every complemented subspace of $L_2[0,1]$ is isomorphic to $\ell_2$.  For the case $p>2$ recall from \cite[Corollary 6]{KP62} that every seminormalized basic sequence in $L_p[0,1]$, $p\in(2,\infty)$, admits a subsequence equivalent to $\ell_p$ or $\ell_2$, and so since $(x_n)_{n=1}^\infty$ is also subsymmetric then it is in fact equivalent to $\ell_p$ or $\ell_2$.  In case $1<p<2$, since $(x_n)_{n=1}^\infty$ is complemented in $L_p[0,1]$, its corresponding sequence of biorthogonal functionals $(x_n^*)_{n=1}^\infty$ is contained in $L_{p'}[0,1]$, where $\frac{1}{p}+\frac{1}{p'}=1$.  Since $p'>2$, a subsequence of $(x_n^*)_{n=1}^\infty$ is equivalent to $\ell_{p'}$ or $\ell_2$, whence by subsymmetry $(x_n)_{n=1}^\infty$ is equivalent to $\ell_p$ or $\ell_2$.
\end{proof}

\begin{lemma}\label{canonical-copy-complemented}
Let $X$ be a Banach space whose canonical isometric copy in $X^{**}$ is complemented.  Then for any free ultrafilter $\mathcal{U}$ on $\mathbb{N}$, the canonical copy of $X$ in $X^\mathcal{U}$ is complemented in $X^\mathcal{U}$.
\end{lemma}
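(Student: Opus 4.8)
The plan is to produce the complementing projection on $X^{\mathcal U}$ by ``spreading'' the given projection $P\colon X^{**}\to X$ across the ultrapower and then composing with a canonical averaging map. First I would recall the standard fact that the ultrapower $X^{\mathcal U}$ sits naturally inside the bidual: more precisely, there is a canonical norm-one ``ultralimit'' operator $J\colon X^{\mathcal U}\to X^{**}$ defined by $\langle J[(x_n)], x^*\rangle = \lim_{n\to\mathcal U}\langle x_n, x^*\rangle$ for $x^*\in X^*$, and this $J$ restricts to the identity on the canonical copy of $X$ inside $X^{\mathcal U}$ (the diagonal embedding $x\mapsto [(x,x,\dots)]$). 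This is where the free ultrafilter is used: the weak$^*$-compactness of the unit ball of $X^{**}$ guarantees the ultralimit exists, and freeness is what makes $J$ well-defined and the diagonal a genuine isometric copy.

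The key step is then simply to set $Q := P\circ J\colon X^{\mathcal U}\to X \subseteq X^{\mathcal U}$, viewing $P$ as mapping into $X$ and then re-embedding $X$ diagonally into $X^{\mathcal U}$. This $Q$ is bounded with $\|Q\|\leq\|P\|$, and on the diagonal copy of $X$ we have $J|_X = \mathrm{id}_X$ (because the canonical image of $X$ in $X^{**}$ is exactly the image under $J$ of the diagonal) and $P|_X = \mathrm{id}_X$ (that is precisely the hypothesis that $P$ is a projection onto the canonical copy of $X$), so $Q$ acts as the identity on the diagonal. Hence $Q^2 = Q$ once one checks that $Q$ lands in the diagonal copy, which it does by construction since $P$ maps into $X$ and we re-embed diagonally. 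Therefore $Q$ is a bounded linear projection of $X^{\mathcal U}$ onto the canonical copy of $X$, which is the assertion.

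The only genuinely delicate point — the one I would single out as the main obstacle — is verifying carefully that $J$ really does restrict to the identity on the diagonal copy of $X$ and that the diagonal copy is identified, under $J$, with the \emph{canonical} copy of $X$ in $X^{**}$ (and not some twisted embedding); this requires being careful with the two different ``canonical copies'' in play (one in $X^{\mathcal U}$, one in $X^{**}$) and checking that the obvious identifications are compatible. Once that bookkeeping is done, boundedness of $Q$ and the projection identity $Q^2=Q$ are immediate from $\|J\|\le 1$, $\|P\|<\infty$, and $Q|_{\mathrm{diag}(X)} = \mathrm{id}$. I would also remark that no separability or reflexivity hypothesis on $X$ is needed, and that the norm of the resulting projection is at most that of $P$.
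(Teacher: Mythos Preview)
Your proposal is correct and follows essentially the same route as the paper: construct the weak$^*$-ultralimit map $J\colon X^{\mathcal U}\to X^{**}$ (the paper calls it $\widehat{V}$, defining it first on $\ell_\infty(X)$ and then passing to the quotient), observe that it fixes the diagonal copy of $X$, and compose with the given projection $P$ onto $X\subset X^{**}$. If anything, you are more explicit than the paper about the final composition $Q=P\circ J$ and the check that $Q^2=Q$; the paper stops once $\widehat V$ is seen to carry the diagonal isomorphically onto the canonical image in $X^{**}$. One small quibble: your aside that ``freeness is what makes $J$ well-defined and the diagonal a genuine isometric copy'' is not quite right---both facts hold for any ultrafilter; freeness only ensures the ultrapower is not trivially $X$ itself---but this does not affect the argument.
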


\begin{proof}
Let $q:X\to X^{**}$ denote the canonical embedding, and define the norm-1 linear operator $V:\ell_\infty(X)\to X^{**}$ by the rule
$$
V(x_n)_{n=1}^\infty=\underset{\mathcal{U}}{\text{weak*-lim}}\,qx_n,
$$
which exists by the weak*-compactness of $B_{X^{**}}$ together with the fact that if $K$ is a compact Hausdorff space then for each $(k_n)_{n=1}^\infty\in K^\mathbb{N}$ the (unique) limit $\lim_\mathcal{U}k_n$ exists in $K$.  Note that if $\lim_\mathcal{U}x_n=0$ then $V(x_n)_{n=1}^\infty=0$ and so $V$ induces an operator $\widehat{V}:X^\mathcal{U}\to X^{**}$ which agrees with $V$ along the diagonal.  In particular, $\widehat{V}$ sends the canonical copy of $X$ in $X^\mathcal{U}$ isomorphically to the canonical copy of $X$ in $X^{**}$.
\end{proof}

\begin{theorem}\label{complemented-in-Lp}
Fix $1\leq p<\infty$, and let $(x_n)_{n=1}^\infty$ be a basis for a Banach space $X$ whose canonical copy in $X^{**}$ is complemented.  If the finite-dimensional spaces $[x_n]_{n=1}^N$, $N\in\mathbb{N}$, are uniformly complemented in $L_p(\mu)$ for some measure $\mu$, then $X$ is complemented in $L_p[0,1]$.
\end{theorem}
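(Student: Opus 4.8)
The plan is to pass to a Banach-space ultrapower of $L_p(\mu)$, assemble a complemented copy of $X$ inside it out of the uniform finite-dimensional embeddings and projections (this is where Lemma~\ref{canonical-copy-complemented} enters), identify that ultrapower as an $L_p$-space, and then return to $L_p[0,1]$ using separability of $X$. Throughout, fix a free ultrafilter $\mathcal{U}$ on $\mathbb{N}$ and put $E_N:=[x_n]_{n=1}^N$.

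First I would set up the maps. By hypothesis there are embeddings $j_N\colon E_N\to L_p(\mu)$ and projections $P_N\colon L_p(\mu)\to j_N(E_N)$ with $\sup_N\|j_N\|\,\|j_N^{-1}\|\,\|P_N\|<\infty$; rescaling, assume $\|j_N\|=1$ and set $C:=\sup_N\max\{\|j_N^{-1}\|,\|P_N\|\}<\infty$. For $x\in X$ let $x^{(N)}$ denote the $N$th partial sum of $x$ with respect to the basis $(x_n)_{n=1}^\infty$. The assignment $x\mapsto(j_Nx^{(N)})_{N=1}^\infty\in\ell_\infty(L_p(\mu))$ followed by the quotient onto $L_p(\mu)^{\mathcal{U}}$ defines a bounded linear map $T\colon X\to L_p(\mu)^{\mathcal{U}}$; since $C^{-1}\|x^{(N)}\|\leq\|j_Nx^{(N)}\|\leq\|x^{(N)}\|$ and $x^{(N)}\to x$, we get $C^{-1}\|x\|\leq\|Tx\|\leq\|x\|$, so $T$ is an isomorphic embedding. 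Dually, $(y_N)_{N=1}^\infty\mapsto(j_N^{-1}P_Ny_N)_{N=1}^\infty$ is bounded on $\ell_\infty(L_p(\mu))$ with values in $\ell_\infty(X)$ and sends null sequences to null sequences (as $\|j_N^{-1}P_Ny_N\|\leq C^2\|y_N\|$), so it descends to a bounded operator $\widetilde R\colon L_p(\mu)^{\mathcal{U}}\to X^{\mathcal{U}}$. Because $P_N$ fixes $j_N(E_N)$ and $x^{(N)}\to x$, one computes $\widetilde RT=\iota_X$, where $\iota_X\colon X\to X^{\mathcal{U}}$ is the canonical (isometric) diagonal embedding.

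Next I would invoke Lemma~\ref{canonical-copy-complemented}: since the canonical copy of $X$ in $X^{**}$ is complemented, there is a projection $Q\colon X^{\mathcal{U}}\to\iota_X(X)$. Then $\Pi:=T\iota_X^{-1}Q\widetilde R\colon L_p(\mu)^{\mathcal{U}}\to L_p(\mu)^{\mathcal{U}}$ satisfies $\iota_X^{-1}Q\widetilde RT=\iota_X^{-1}Q\iota_X=\mathrm{id}_X$, so $\Pi T=T$, whence $\Pi$ is a bounded idempotent with range $T(X)$. Thus $X$ is isomorphic to a complemented subspace of $L_p(\mu)^{\mathcal{U}}$. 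Since a Banach-space ultrapower of an $L_p$-space is an abstract $L_p$-space, hence isometric to $L_p(\nu)$ for some measure $\nu$, $X$ is isomorphic to a complemented subspace of $L_p(\nu)$. Finally, because $X$ is separable (it has a basis), its image in $L_p(\nu)$ lies in a sublattice $Z$ that is the range of a norm-one conditional-expectation projection of $L_p(\nu)$ and is isometric to $L_p$ of a separable measure algebra; every such space is $1$-complemented in $L_p[0,1]$, so composing the relevant projections shows $X$ is complemented in $L_p[0,1]$.

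The step I expect to be the crux is turning ``the $E_N$ are uniformly complemented in $L_p(\mu)$'' into a genuine projection: the naive construction only complements $X$ relative to the diagonal copy $\iota_X(X)\subset X^{\mathcal{U}}$, and it is precisely Lemma~\ref{canonical-copy-complemented}, fed by the hypothesis on the canonical copy of $X$ in $X^{**}$, that removes this defect. By contrast, the identification of the ultrapower with an $L_p(\nu)$ and the reduction from $L_p(\nu)$ to $L_p[0,1]$ are routine and I would treat them briefly.
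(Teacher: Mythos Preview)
Your proposal is correct and follows essentially the same route as the paper: build maps $T\colon X\to L_p(\mu)^{\mathcal U}$ and $\widetilde R\colon L_p(\mu)^{\mathcal U}\to X^{\mathcal U}$ from the uniform finite-dimensional data so that $\widetilde R T$ is the diagonal embedding, invoke Lemma~\ref{canonical-copy-complemented} to project back to $X$, then identify the ultrapower as an $L_p(\nu)$ and pass to a separable sub-$L_p$. The only cosmetic difference is in the last reduction, where the paper cites the Lacey--Wojtaszczyk classification of separable $L_p$-spaces rather than a conditional-expectation argument, but the substance is the same.
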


\begin{proof}
Let $X_N=[x_n]_{n=1}^N$ and $P_N:X\to X_N$ the projection onto $X_N$.  By uniform complementedness of $X_N$, we can find uniformly bounded linear operators $A_N:X_N\to L_p(\mu)$ and $B_N:L_p(\mu)\to X_N$ such that $B_NA_N$ is the identity on $X_N$.  Let $\mathcal{U}$ be any free ultrafilter on $\mathbb{N}$.  Define the bounded linear operators $A:X\to L_p(\mu)^\mathcal{U}$ by the rule $Ax=(A_NP_Nx)_\mathcal{U}$, and $B:L_p(\mu)^\mathcal{U}\to X^\mathcal{U}$ by $B(y_N)_\mathcal{U}=(B_Ny_N)_\mathcal{U}$.    Let $x\in\text{span}(x_n)_{n=1}^\infty$ so that, for some $k\in\mathbb{N}$,
$$
BAx=(P_1x,\cdots,P_kx,x,x,\cdots)_\mathcal{U}=x^\mathcal{U}.
$$
By continuity, $BA$ is the canonical injection of $X$ into $X^\mathcal{U}$.  Since this is complemented by Lemma \ref{canonical-copy-complemented}, we have the identity on $X$ factoring through $L_p(\mu)^\mathcal{U}$.

It was proved in \cite[Theorem 3.3]{He80} that ultrapowers preserve $L_p$ lattice structure, and in particular $L_p(\mu)^\mathcal{U}$ is isomorphic to $L_p(\nu)$ for some measure $\nu$.  Although $L_p(\nu)$ itself is nonseparable, we could pass to the closed sublattice generated by $AX$ to find a space isomorphic to a separable $L_p$ containing a complemented copy of $X$.  Due mostly to a famous result of Lacey and Wojtaszczyk, it's known that separable and infinite-dimensional $L_p$ spaces are isomorphic to either $\ell_p$ or $L_p[0,1]$ (\cite[\S4, p15]{JL01}).  This means an isomorph of $X$ is complemented in $L_p[0,1]$.
\end{proof}

An immediate corollary to Lemma \ref{subsymmetric-complement} and Theorem \ref{complemented-in-Lp} is as follows.

\begin{corollary}\label{no-uniformly-complemented-copies}
Let $1<p<\infty$ and $\textbf{\emph{w}}\in\mathbb{W}$.  Then no $L_p(\mu)$ space contains uniformly complemented copies of $[d_n]_{n=1}^N$, $N\in\mathbb{N}$.
\end{corollary}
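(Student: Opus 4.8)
The plan is a quick proof by contradiction: feed $X=d(\textbf{w},p)$ with its canonical basis $(d_n)_{n=1}^\infty$ into Theorem \ref{complemented-in-Lp}, and then apply Lemma \ref{subsymmetric-complement}. So suppose, toward a contradiction, that some $L_p(\mu)$ contained uniformly complemented copies of the finite-dimensional spaces $[d_n]_{n=1}^N$, $N\in\mathbb{N}$.

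First I would verify the standing hypothesis of Theorem \ref{complemented-in-Lp}. Since $1<p<\infty$ and $\textbf{w}\in c_0\setminus\ell_1$, the Lorentz sequence space $d(\textbf{w},p)$ contains no isomorph of $c_0$ or $\ell_1$, hence is reflexive (James's theorem for unconditional bases); consequently its canonical image in $X^{**}$ is all of $X^{**}$, and in particular complemented. Taking $(x_n)_{n=1}^\infty=(d_n)_{n=1}^\infty$, Theorem \ref{complemented-in-Lp} then shows that $d(\textbf{w},p)$ is isomorphic to a complemented subspace of $L_p[0,1]$.

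The basis $(d_n)_{n=1}^\infty$ is symmetric, hence subsymmetric, so Lemma \ref{subsymmetric-complement} forces $d(\textbf{w},p)$ to be isomorphic to $\ell_p$ or to $\ell_2$. Neither is possible for $\textbf{w}\in\mathbb{W}$: the fundamental function $N\mapsto\|\sum_{n=1}^N d_n\|_{d(\textbf{w},p)}=W_N^{1/p}$ is unbounded because $\textbf{w}\notin\ell_1$, yet is $o(N^{1/p})$ because $\textbf{w}\in c_0$ forces $W_N=o(N)$; this is incompatible with the fundamental functions of $\ell_p$ and, when $p=2$, of $\ell_2$, while for $p\neq 2$ an isomorphism onto $\ell_2$ is ruled out since $d(\textbf{w},p)$ contains a (complemented) copy of $\ell_p$. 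This contradiction proves the corollary.

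I do not expect a genuine obstacle here; the proof is essentially just the combination of Theorem \ref{complemented-in-Lp} and Lemma \ref{subsymmetric-complement}. The two points needing care are confirming that $d(\textbf{w},p)$ is reflexive for $1<p<\infty$, so that Theorem \ref{complemented-in-Lp} applies, and recalling the standard fact that a Lorentz sequence space $d(\textbf{w},p)$ with $\textbf{w}\in\mathbb{W}$ is not isomorphic to any $\ell_q$.
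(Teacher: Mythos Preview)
Your proposal is correct and follows exactly the route the paper intends: the corollary is stated there as ``an immediate corollary to Lemma \ref{subsymmetric-complement} and Theorem \ref{complemented-in-Lp},'' and you have simply spelled out the two implicit verifications (reflexivity of $d(\textbf{w},p)$ for $1<p<\infty$, and the standard fact that $d(\textbf{w},p)$ is not isomorphic to $\ell_p$ or $\ell_2$). There is nothing to add.
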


Now we're ready to prove the main result of this section.

\begin{proof}[Proof of Theorem \ref{nontrivial-complemented}]
Fix $k\in\mathbb{N}$, and note that $(d_i^{(k)})_{k=1}^\infty$ is isometric to the $d(\textbf{w}^{(k)},p)$ basis.  Consider the case where $p=1$.  Then we can choose the $N_k$'s large enough that each $(d_i^{(k)})_{i=1}^{N_k}$ fails to be $k$-equivalent to $\ell_1^{N_k}$, and hence $((d_i^{(k)})_{i=1}^{N_k})_{k=1}^\infty$ fails to be equivalent to $\ell_1$.  As $\ell_1$ has a unique unconditional basis by a result of Lindenstrauss and Pe\l{}czy\'{n}ski, it follows that $Y$ is not isomorphic to $\ell_1$.

Next, consider the case where $1<p<\infty$.  By Corollary \ref{no-uniformly-complemented-copies} we can select $N_k$'s large enough that $[d_i^{(k)}]_{i=1}^{N_k}$ fails to be $k$-complemented in $\ell_p$.  As $[d_i^{(k)}]_{i=1}^{N_k}$'s are all 1-complemented in $Y$, that means $Y$ is not isomorphic to $\ell_p$.

It remains to show that $Y$ contains no isomorph of $d(\textbf{w},p)$.  Suppose towards a contradiction that it does.  As $(d_n)_{n=1}^\infty$ is weakly null (cf.\ e.g.\ \cite[Proposition 1]{ACL73}) we can use the gliding hump method together with symmetry to find a normalized block sequence of $((d_i^{(k)})_{i=1}^{N_k})_{k=1}^\infty$ equivalent to $(d_n)_{n=1}^\infty$.  However, every such block sequence is also a block sequence w.r.t.\ $(d_n)_{n=1}^\infty$ with coefficients tending to zero.  By Theorem \ref{ACL-subsequence-lemma} it follows that $(d_n)_{n=1}^\infty$ admits a subsequence equivalent to $\ell_p$, which is impossible.
\end{proof}

\section{A special case}

In this section we show that when $\textbf{w}=(n^{-\theta})_{n=1}^\infty$ for some fixed $0<\theta<1$, the space $Y$ described in Theorem \ref{nontrivial-complemented} can be chosen to be isomorphic to the space
$$
Y_{\textbf{w},p}:=\left(\bigoplus_{N=1}^\infty D_N\right)_p,
$$
where $D_N:=[d_n]_{n=1}^N$ for each $N\in\mathbb{N}$.  As usual, we require some preliminaries.

\begin{lemma}\label{integral-estimate}
Let $0<\theta<1$ and $j,k\in\mathbb{N}$.  Then
$$
((j+1)/k+1)^{1-\theta}-((j+1)/k)^{1-\theta}
\leq\frac{\sum_{n=j+1}^{j+k}n^{-\theta}}{\sum_{n=1}^kn^{-\theta}}
\leq\frac{(j/k+1)^{1-\theta}-(j/k)^{1-\theta}}{2^{1-\theta}-1}.
$$
\end{lemma}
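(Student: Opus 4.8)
The plan is to squeeze both sums between integrals of the decreasing function $x\mapsto x^{-\theta}$ and then reduce the statement to a single elementary scalar inequality. Since $x\mapsto x^{-\theta}$ is strictly decreasing on $(0,\infty)$ with $\int x^{-\theta}\,dx=\frac{x^{1-\theta}}{1-\theta}$, bounding each term $n^{-\theta}$ below by $\int_n^{n+1}x^{-\theta}\,dx$ and above by $\int_{n-1}^n x^{-\theta}\,dx$ and summing over the relevant ranges gives
$$
\frac{(j+k+1)^{1-\theta}-(j+1)^{1-\theta}}{1-\theta}\leq\sum_{n=j+1}^{j+k}n^{-\theta}\leq\frac{(j+k)^{1-\theta}-j^{1-\theta}}{1-\theta}
$$
for the numerator, and, using $\int_0^k x^{-\theta}\,dx$ for the upper bound,
$$
\frac{(k+1)^{1-\theta}-1}{1-\theta}\leq\sum_{n=1}^{k}n^{-\theta}\leq\frac{k^{1-\theta}}{1-\theta}
$$
for the denominator.

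For the lower estimate in the lemma I would divide the lower bound for the numerator by the upper bound for the denominator: the factors $1-\theta$ and $k^{1-\theta}$ cancel, and pulling $k^{1-\theta}$ out of $(j+k+1)^{1-\theta}-(j+1)^{1-\theta}$ leaves exactly $((j+1)/k+1)^{1-\theta}-((j+1)/k)^{1-\theta}$. For the upper estimate I would divide the upper bound for the numerator by the lower bound for the denominator; after cancelling $1-\theta$ and writing $(j+k)^{1-\theta}-j^{1-\theta}=k^{1-\theta}\big((j/k+1)^{1-\theta}-(j/k)^{1-\theta}\big)$, it then suffices to check the scalar inequality $(k+1)^{1-\theta}-1\geq(2^{1-\theta}-1)\,k^{1-\theta}$ for every $k\in\mathbb{N}$.

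Proving that last inequality is the step I expect to be the main obstacle, though it is not severe. Setting $\alpha=1-\theta\in(0,1)$ and $u=1/k\in(0,1]$, it is equivalent to $\psi(u):=(1+u)^\alpha-u^\alpha-(2^\alpha-1)\geq 0$. Since $\psi(1)=0$ and $\psi'(u)=\alpha\big((1+u)^{\alpha-1}-u^{\alpha-1}\big)<0$ (because $\alpha-1<0$, so $x\mapsto x^{\alpha-1}$ is decreasing), the function $\psi$ is nonincreasing on $(0,1]$ and hence nonnegative there, which is all that is needed. Combining the two estimates completes the proof; all denominators that appear are positive because $0<\theta<1$ forces $1-\theta>0$ and $2^{1-\theta}-1>0$.
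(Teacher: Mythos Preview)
Your argument is correct and follows essentially the same route as the paper: bound both sums by integrals of $t^{-\theta}$, factor out $k^{1-\theta}$, and reduce the upper estimate to the monotonicity of $u\mapsto(1+u)^{1-\theta}-u^{1-\theta}$ on $(0,1]$ (the paper phrases this as $f(t)=(1+1/t)^{1-\theta}-(1/t)^{1-\theta}$ being increasing on $[1,\infty)$, which is the same statement with $t=1/u$). The only cosmetic difference is your choice of $\int_0^k t^{-\theta}\,dt=k^{1-\theta}/(1-\theta)$ as the upper bound for the denominator in the lower estimate, whereas the paper uses $1+\int_1^k t^{-\theta}\,dt=(k^{1-\theta}-\theta)/(1-\theta)$ and then needs the extra trivial step $k^{1-\theta}-\theta\leq k^{1-\theta}$; your variant is slightly cleaner here.
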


\begin{proof}
Observe that the map
$$
f(t)=(1+1/t)^{1-\theta}-(1/t)^{1-\theta}
$$
is increasing on $[1,\infty)$, and hence has a minimum $f(1)=2^{1-\theta}-1$.  Hence,
\begin{align*}
((j+1)/k+1)^{1-\theta}-((j+1)/k)^{1-\theta}
&\leq\frac{(j+k+1)^{1-\theta}-(j+1)^{1-\theta}}{k^{1-\theta}-\theta}
\\&=\frac{\int_{j+1}^{j+k+1}t^{-\theta}\;dt}{1+\int_1^kt^{-\theta}\;dt}
\\&\leq\frac{\sum_{n=j+1}^{j+k}n^{-\theta}}{\sum_{n=1}^kn^{-\theta}}
\\&\leq\frac{\int_j^{j+k}t^{-\theta}\;dt}{\int_1^{k+1}t^{-\theta}\;dt}
\\&=\frac{(j+k)^{1-\theta}-j^{1-\theta}}{(k+1)^{1-\theta}-1}
\\&=\frac{(j/k+1)^{1-\theta}-(j/k)^{1-\theta}}{(1+1/k)^{1-\theta}-(1/k)^{1-\theta}}
\\&\leq\frac{(j/k+1)^{1-\theta}-(j/k)^{1-\theta}}{2^{1-\theta}-1}.
\end{align*}
\end{proof}

\begin{lemma}\label{uniformly-equivalent-weights}
Let $0<\theta<1$ and $\textbf{\emph{w}}=(w_n)_{n=1}^\infty=(n^{-\theta})_{n=1}^\infty\in\mathbb{W}$.  Then
$$
\frac{1-\theta}{2}\cdot w_i
\leq w_i^{(k)}
\leq\frac{2-2^\theta}{2^{1-\theta}-1}\cdot w_i
\;\;\;\text{ for all }i,k\in\mathbb{N}.
$$
In particular, if $1\leq p<\infty$ then there is a constant $C\in[1,\infty)$, depending only on $\theta$, such that
$$
(d_n)_{n=1}^\infty\approx_C(d_i^{(k)})_{i=1}^\infty\;\;\;\text{ for all }k\in\mathbb{N}.
$$
\end{lemma}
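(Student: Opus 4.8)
The plan is to compare $w_i^{(k)}$ with $w_i=i^{-\theta}$ by applying Lemma~\ref{integral-estimate} with $j=(i-1)k$, for which $\sum_{n=j+1}^{j+k}n^{-\theta}/\sum_{n=1}^{k}n^{-\theta}$ is precisely $w_i^{(k)}$. Since that lemma requires $j\in\mathbb{N}$, I would first dispose of $i=1$, where $w_1^{(k)}=1=w_1$: the left-hand inequality $\frac{1-\theta}{2}\leq 1$ is immediate from $\theta\in(0,1)$, and the right-hand inequality $1\leq\frac{2-2^{\theta}}{2^{1-\theta}-1}$ rearranges, upon writing $x=2^{\theta}\in(1,2)$, to $(x-1)(x-2)\leq 0$, which holds.

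For $i\geq 2$ one has $(j+1)/k=i-1+1/k$ and $j/k=i-1$. The lower bound then follows almost at once from Lemma~\ref{integral-estimate}: one has $w_i^{(k)}\geq (i+1/k)^{1-\theta}-(i-1+1/k)^{1-\theta}=(1-\theta)\int_{i-1+1/k}^{i+1/k}t^{-\theta}\,dt\geq(1-\theta)(i+1/k)^{-\theta}$, the last step because the interval of integration has length $1$ and $t^{-\theta}$ is decreasing; since $\frac{1}{ik}\leq 1$ and $\theta\leq 1$ we get $(i+1/k)^{-\theta}=i^{-\theta}(1+\frac{1}{ik})^{-\theta}\geq 2^{-\theta}i^{-\theta}\geq\frac12 i^{-\theta}$, and hence $w_i^{(k)}\geq\frac{1-\theta}{2}w_i$.

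For the upper bound, Lemma~\ref{integral-estimate} gives $w_i^{(k)}\leq(i^{1-\theta}-(i-1)^{1-\theta})/(2^{1-\theta}-1)$, so it is enough to prove $i^{1-\theta}-(i-1)^{1-\theta}\leq(2-2^{\theta})i^{-\theta}$ for $i\geq 2$. Multiplying by $i^{\theta}$ and substituting $v=1/(i-1)\in(0,1]$ rewrites the left-hand side as $((1+v)-(1+v)^{\theta})/v$. I would then note that $g(v):=(1+v)-(1+v)^{\theta}$ satisfies $g(0)=0$ and $g''(v)=\theta(1-\theta)(1+v)^{\theta-2}>0$, so $g$ is convex; consequently the secant slope $v\mapsto g(v)/v$ is nondecreasing on $(0,\infty)$, which forces $((1+v)-(1+v)^{\theta})/v\leq g(1)=2-2^{\theta}$ for $v\in(0,1]$. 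This gives $w_i^{(k)}\leq\frac{2-2^{\theta}}{2^{1-\theta}-1}w_i$, completing the displayed inequalities.

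For the ``in particular'' clause: both $\textbf{w}$ and $\textbf{w}^{(k)}$ lie in $\mathbb{W}$, so for every $(a_n)\in c_0$ the $d(\textbf{w},p)$- and $d(\textbf{w}^{(k)},p)$-norms are $(\sum_n\hat a_n^{p}w_n)^{1/p}$ and $(\sum_n\hat a_n^{p}w_n^{(k)})^{1/p}$, evaluated at the \emph{same} decreasing rearrangement $(\hat a_n)$; the two-sided weight estimate thus passes to a two-sided norm estimate with constants $(\frac{1-\theta}{2})^{1/p}$ and $(\frac{2-2^{\theta}}{2^{1-\theta}-1})^{1/p}$. Since $(d_i^{(k)})_{i=1}^{\infty}$ is isometrically equivalent to the $d(\textbf{w}^{(k)},p)$-basis, we conclude $(d_n)_{n=1}^{\infty}\approx_C(d_i^{(k)})_{i=1}^{\infty}$ with, e.g., $C:=\max(\frac{2}{1-\theta},\frac{2-2^{\theta}}{2^{1-\theta}-1})$ (using $x^{1/p}\leq x$ for $x\geq 1$ and $p\geq 1$), which depends only on $\theta$. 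I expect the upper bound $i^{1-\theta}-(i-1)^{1-\theta}\leq(2-2^{\theta})i^{-\theta}$ to be the one subtle point: the crude estimate $\int_{i-1}^{i}(1-\theta)t^{-\theta}\,dt\leq(1-\theta)(i-1)^{-\theta}$ is too lossy, so one is forced into the substitution $v=1/(i-1)$ and the convexity/secant-slope argument, and one must not forget that $i=1$ lies outside this argument and must be checked separately.
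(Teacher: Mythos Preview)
Your proof is correct and follows the same overall approach as the paper: apply Lemma~\ref{integral-estimate} with $j=(i-1)k$, then reduce the upper bound to the inequality $i-(i-1)^{1-\theta}i^{\theta}\leq 2-2^{\theta}$ for $i\geq 2$. The calculus details differ slightly---the paper asserts that $t\mapsto t-(t-1)^{1-\theta}t^{\theta}$ and $t\mapsto t-(t-1/2)^{1-\theta}t^{\theta}$ are decreasing on $[2,\infty)$ (with values $2-2^{\theta}$ at $t=2$ and limit $(1-\theta)/2$ at infinity) to obtain both bounds, whereas you justify the upper bound via a convexity/secant-slope argument after the substitution $v=1/(i-1)$ and obtain the lower bound by the direct integral estimate $(i+1/k)^{1-\theta}-(i-1+1/k)^{1-\theta}\geq(1-\theta)(i+1/k)^{-\theta}\geq\tfrac{1-\theta}{2}i^{-\theta}$; both routes are valid, and yours also handles $k=1$ without a separate case.
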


\begin{proof}
We can assume $i,k\geq 2$.  Observe that
$$
t\mapsto t-(t-1)^{1-\theta}\cdot t^\theta
$$
is decreasing on $[2,\infty)$ and hence has the maximum $2-2^\theta$.  Also, the function
$$
t\mapsto t-(t-1/2)^{1-\theta}\cdot t^\theta
$$
is decreasing on $[2,\infty)$ and hence has infimum
$$
\lim_{t\to\infty}\left(t-(t-1/2)^{1-\theta}\cdot t^\theta\right)=\frac{1-\theta}{2}.
$$
Thus, by the above together with Lemma \ref{integral-estimate},
\begin{align*}
\frac{1-\theta}{2}\cdot i^{-\theta}
&\leq\left(i-(i-1/2)^{1-\theta}\cdot i^\theta\right)i^{-\theta}
\\&=i^{1-\theta}-(i-1/2)^{1/\theta}
\\&\leq(i+1/k)^{1-\theta}-(i-1+1/k)^{1/\theta}
\\&\leq\frac{\sum_{n=(i-1)k+1}^{ik}n^{-\theta}}{\sum_{n=1}^kn^{-\theta}}
\\&\leq\frac{i^{1-\theta}-(i-1)^{1-\theta}}{2^{1-\theta}-1}
\\&=\frac{i-(i-1)^{1-\theta}\cdot i^\theta}{2^{1-\theta}-1}\cdot i^{-\theta}
\\&\leq\frac{2-2^\theta}{2^{1-\theta}-1}\cdot i^{-\theta}.
\end{align*}
\end{proof}

\begin{remark}\label{disjoint-vectors}
Suppose $x=\sum_{n\in A}a_nd_n$ and $y=\sum_{n\in B}b_nd_n$ for finite and disjoint sets $A,B\subset\mathbb{N}$, where $(a_n)_{n\in A}$ and $(b_n)_{n\in B}$ are sequences of scalars.  Then
$$
\|x+y\|^p\leq\|x\|^p+\|y\|^p.
$$
\end{remark}

\begin{lemma}\label{last-step}
Let $(j_k)_{k=1}^\infty$ be a sequence of positive integers, and for each $k$ set
$$J_k=j_1+2j_2+3j_3+\cdots+kj_k.$$
Suppose that there are constants $A,B\in(0,\infty)$ such that
\begin{equation}\label{A-constant}
w_i^{(j_k)}\leq Aw_i
\end{equation}
and
\begin{equation}\label{B-constant}
Bw_i\leq\frac{1}{W_{j_k}}\sum_{n=J_{k-1}+(i-1)j_k+1}^{J_{k-1}+ij_k}w_n
\end{equation}
for all $i=1,\cdots,k$ and all $k\in\mathbb{N}$.  Then $((d_i^{(j_k)})_{i=1}^k)_{k=1}^\infty$ is equivalent to the canonical $Y_{\textbf{\emph w},p}$ basis.
\end{lemma}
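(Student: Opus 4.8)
The plan is to estimate, within constants depending only on $A$, $B$, and $p$, the norm in $d(\textbf{w},p)$ of an arbitrary finitely supported linear combination of $((d_i^{(j_k)})_{i=1}^{k})_{k=1}^{\infty}$, and to recognise the result as a $Y_{\textbf{w},p}$-norm. Write $e_i^{(k)}$ for the $i$-th (translated) constant-coefficient block of the $k$-th group: it is supported on the set $B_{k,i}:=\{\,J_{k-1}+(i-1)j_k+1,\dots,J_{k-1}+ij_k\,\}$ of $j_k$ coordinates, on each of which it equals $W_{j_k}^{-1/p}$. Fix scalars $c_i^{(k)}$, almost all zero, put $x=\sum_{k}\sum_{i=1}^{k}c_i^{(k)}e_i^{(k)}$, and let $\hat c_1^{(k)}\geq\cdots\geq\hat c_k^{(k)}\geq 0$ denote the decreasing rearrangement of $(|c_1^{(k)}|,\dots,|c_k^{(k)}|)$. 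Recalling that the $D_k$-norm of $\sum_{i=1}^{k}c_i^{(k)}d_i$ equals $\bigl(\sum_{i=1}^{k}(\hat c_i^{(k)})^{p}w_i\bigr)^{1/p}$ and that $Y_{\textbf{w},p}$ is the $\ell_p$-sum of the $D_N$, it suffices to prove
\[
\|x\|_{d(\textbf{w},p)}^{p}\ \approx\ \sum_{k}\sum_{i=1}^{k}(\hat c_i^{(k)})^{p}w_i
\]
for all such $x$; this is precisely the asserted equivalence of basic sequences.

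For the upper estimate, group the coordinates as $x=\sum_k x_k$ with $x_k:=\sum_{i=1}^{k}c_i^{(k)}e_i^{(k)}$. The $x_k$ have pairwise disjoint finite supports, so iterating Remark \ref{disjoint-vectors} gives $\|x\|^{p}\leq\sum_k\|x_k\|^{p}$. For each $i$ the vector $x_k$ is constant of modulus $W_{j_k}^{-1/p}\hat c_i^{(k)}$ on a block of $j_k$ coordinates, so its decreasing rearrangement gives $\|x_k\|^{p}=\sum_{i=1}^{k}(\hat c_i^{(k)})^{p}w_i^{(j_k)}$, which by \eqref{A-constant} is at most $A\sum_{i=1}^{k}(\hat c_i^{(k)})^{p}w_i$. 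Summing over $k$ closes this side.

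For the lower estimate I would use the identity $\|x\|^{p}=\sup_{\sigma}\sum_n|x_n|^{p}w_{\sigma(n)}$, the supremum over permutations $\sigma$ of $\mathbb{N}$, and exhibit one adequate $\sigma$. For each $k$ fix a permutation $\tau_k$ of $\{1,\dots,k\}$ with $|c^{(k)}_{\tau_k(i)}|=\hat c_i^{(k)}$, and let $\sigma$ be the identity off $\bigcup_{k}\bigcup_{i=1}^{k}B_{k,i}$ and, on each group $k$, a bijection carrying $B_{k,\tau_k(i)}$ onto $B_{k,i}$ for every $i$; this is legitimate because all these sets share the cardinality $j_k$ and $i\mapsto\tau_k(i)$ permutes $\{1,\dots,k\}$. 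Since $x$ is constant of modulus $W_{j_k}^{-1/p}\hat c_i^{(k)}$ on $B_{k,\tau_k(i)}$, this $\sigma$ yields
\[
\|x\|^{p}\ \geq\ \sum_{k}\sum_{i=1}^{k}\frac{(\hat c_i^{(k)})^{p}}{W_{j_k}}\sum_{n\in B_{k,i}}w_n\ \geq\ B\sum_{k}\sum_{i=1}^{k}(\hat c_i^{(k)})^{p}w_i,
\]
the last inequality being exactly \eqref{B-constant}. Together with the upper bound this gives equivalence with constant $(A/B)^{1/p}$.

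The upper bound is routine; the delicate step is the lower bound, since the rearrangement norm of a sum of disjointly supported blocks is in general strictly larger than the $\ell_p$-combination of their norms, so there is no estimate dual to Remark \ref{disjoint-vectors} and a near-optimal permutation must be constructed by hand. Here hypothesis \eqref{B-constant} is precisely the guarantee that, however far out the $k$-th group of blocks has been translated, each of its $k$ weight-slabs $B_{k,i}$ still carries weight mass at least $BW_{j_k}w_i$; routing onto $B_{k,i}$ the block of group $k$ whose coefficient is the $i$-th largest then converts this budget into $B$ times the $D_k$-norm of $(c_i^{(k)})_{i=1}^{k}$, which is what the lower estimate requires.
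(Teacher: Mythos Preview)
Your argument is correct and follows the same two-step route as the paper: the upper bound via Remark~\ref{disjoint-vectors} together with \eqref{A-constant}, and the lower bound by choosing a single permutation that rearranges, within each group $k$, the blocks so that the $i$th largest coefficient sits on $B_{k,i}$, then invoking \eqref{B-constant}. The only difference is cosmetic: the paper writes the lower-bound inequality in one line without naming the permutation, whereas you spell out the map $\sigma$ explicitly (which, if anything, makes that step clearer).
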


\begin{proof}
Due to \eqref{A-constant} we have $(d_i^{(j_k)})_{i=1}^k\lesssim_A d(\textbf{w},p)^k$.  Now, using Remark \ref{disjoint-vectors}, for any finitely-supported scalar sequence $((a_i^{(k)})_{i=1}^k)_{k=1}^\infty$,
\begin{align*}
\left\|\sum_{k=1}^\infty\sum_{i=1}^k a_i^{(k)}d_i^{(j_k)}\right\|^p
&\leq\sum_{k=1}^\infty\left\|\sum_{i=1}^ka_i^{(k)}d_i^{(j_k)}\right\|^p
\\&\leq A^p\sum_{k=1}^\infty\left\|(a_i^{(k)})_{i=1}^k\right\|_{d(\textbf{w},p)}^p
\\&=A^p\left\|((a_i^{(k)})_{i=1}^k)_{k=1}^\infty\right\|_{Y_{\textbf{w},p}}^p.
\end{align*}
For the reverse inequality, let $(\hat{a}_i^{(k)})_{i=1}^k$ denote the decreasing rearrangement of $(|a_i^{(k)}|)_{i=1}^k$.  Then, applying \eqref{B-constant},
\begin{align*}
\left\|\sum_{k=1}^\infty\sum_{i=1}^k a_i^{(k)}d_i^{(j_k)}\right\|^p
&=\left\|\sum_{k=1}^\infty\sum_{i=1}^k\frac{a_i^{(k)}}{W_{j_k}^{1/p}}\sum_{n=J_{k-1}+(i-1)j_k+1}^{J_{k-1}+ij_k}d_n\right\|^p
\\&\geq\sum_{k=1}^\infty\sum_{i=1}^k\frac{\hat{a}_i^{(k)p}}{W_{j_k}}\sum_{n=J_{k-1}+(i-1)j_k+1}^{J_{k-1}+ij_k}w_n
\\&\geq B\sum_{k=1}^\infty\sum_{i=1}^k\hat{a}_i^{(k)p}w_i
\\&=B\left\|((a_i^{(k)})_{i=1}^k)_{k=1}^\infty\right\|_{Y_{\textbf{w},p}}^p.
\end{align*}
\end{proof}

\begin{theorem}\label{main-equivalent}
Let $(j_k)_{k=1}^\infty$ and $(J_k)_{k=1}^\infty$ be is as in Lemma \ref{last-step}.  Suppose there is $M\in[1,\infty)$ such that
$$
\frac{J_{k-1}}{j_k}\leq M,\;\;\;\text{ for all }k=2,3,4,\cdots.
$$
Then $((d_i^{(j_k)})_{i=1}^k)_{k=1}^\infty$ is equivalent to the canonical $Y_{\textbf{\emph w},p}$ basis.
\end{theorem}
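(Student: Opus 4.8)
The plan is to check that the hypotheses \eqref{A-constant} and \eqref{B-constant} of Lemma \ref{last-step} hold with constants $A,B\in(0,\infty)$ depending only on $\theta$ (recall that $\textbf{w}=(n^{-\theta})_{n=1}^\infty$ throughout this section) and on $M$, and then quote that lemma. Condition \eqref{A-constant} is immediate: Lemma \ref{uniformly-equivalent-weights} gives $w_i^{(k)}\leq\frac{2-2^\theta}{2^{1-\theta}-1}w_i$ for all $i,k\in\mathbb{N}$, so in particular (taking the value $k=j_k$) condition \eqref{A-constant} holds with $A=\frac{2-2^\theta}{2^{1-\theta}-1}$. All the real work is in establishing \eqref{B-constant}.

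For $k=1$ the only index is $i=1$, and the relevant normalized block sum is $\frac{1}{W_{j_1}}\sum_{n=1}^{j_1}w_n=1$, so \eqref{B-constant} holds at $k=1$ as soon as $B\leq 1$. Now fix $k\geq 2$ and $1\leq i\leq k$, and set $j=J_{k-1}+(i-1)j_k$; since $J_{k-1}\geq j_1\geq 1$ we have $j\in\mathbb{N}$, so Lemma \ref{integral-estimate} applies with its ``$k$'' equal to $j_k$ and yields
\[
\frac{1}{W_{j_k}}\sum_{n=J_{k-1}+(i-1)j_k+1}^{J_{k-1}+ij_k}w_n\;\geq\;((j+1)/j_k+1)^{1-\theta}-((j+1)/j_k)^{1-\theta}.
\]
By the hypothesis $J_{k-1}/j_k\leq M$ and $j_k\geq 1$ we get $(j+1)/j_k=J_{k-1}/j_k+(i-1)+1/j_k\leq M+i$. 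Since $t\mapsto(t+1)^{1-\theta}-t^{1-\theta}$ has derivative $(1-\theta)((t+1)^{-\theta}-t^{-\theta})<0$, it is decreasing, so the right-hand side above is at least $(M+i+1)^{1-\theta}-(M+i)^{1-\theta}$, which by the mean value theorem applied to $t\mapsto t^{1-\theta}$ is at least $(1-\theta)(M+i+1)^{-\theta}$. Finally $i\geq 1$ gives $M+i+1\leq(M+2)i$, hence the whole chain is bounded below by $(1-\theta)(M+2)^{-\theta}i^{-\theta}=(1-\theta)(M+2)^{-\theta}w_i$. Thus \eqref{B-constant} holds with $B=(1-\theta)(M+2)^{-\theta}$, which is $\leq 1$ and depends only on $\theta$ and $M$, so the case $k=1$ is also covered.

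Having verified \eqref{A-constant} and \eqref{B-constant}, Lemma \ref{last-step} immediately gives that $((d_i^{(j_k)})_{i=1}^k)_{k=1}^\infty$ is equivalent to the canonical $Y_{\textbf{w},p}$ basis, as required. The one step needing care is the lower estimate \eqref{B-constant}: one must compare the normalized ``offset'' $j/j_k$ of the $i$-th length-$j_k$ block at level $k$ with the index $i$, and the assumption $J_{k-1}/j_k\leq M$ is exactly what forces this offset to grow at most linearly in $i$, uniformly in $k$. That uniformity is what makes each block carry weight comparable to $w_i$ and hence makes the whole array behave like the $Y_{\textbf{w},p}$ basis; everything else is routine monotonicity and convexity of $t\mapsto t^{1-\theta}$ combined with Lemma \ref{integral-estimate}.
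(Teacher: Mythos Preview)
Your proof is correct and follows essentially the same route as the paper: verify conditions \eqref{A-constant} and \eqref{B-constant} of Lemma~\ref{last-step}, pulling \eqref{A-constant} from Lemma~\ref{uniformly-equivalent-weights} and deducing \eqref{B-constant} from the lower estimate in Lemma~\ref{integral-estimate} together with the Mean Value Theorem. Your lower-bound computation is in fact a bit cleaner than the paper's---by first using the monotonicity of $t\mapsto(t+1)^{1-\theta}-t^{1-\theta}$ to replace $(j+1)/j_k$ by $M+i$, you avoid the paper's ``without loss of generality $j_k\geq 2$'' step and arrive at the constant $B=(1-\theta)(M+2)^{-\theta}$ instead of the paper's $\tfrac{1-\theta}{2}(M+1)^{-\theta}$.
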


\begin{proof}
Due to Lemma \ref{last-step}, it suffices to show that \eqref{B-constant} and \eqref{A-constant} both hold.  To do this, fix an arbitrary $k\in\mathbb{N}$.  We may assume without loss of generality that $j_k\geq 2$.  Now, by Lemma \ref{integral-estimate},
\begin{align*}
\frac{1}{W_{j_k}}\sum_{n=J_{k-1}+(i-1)j_k+1}^{J_{k-1}+ij_k}w_n
&\geq\left(\frac{J_{k-1}+(i-1)j_k+1}{j_k}+1\right)^{1-\theta}-\left(\frac{J_{k-1}+(i-1)j_k+1}{j_k}\right)^{1-\theta}
\\&=\left(\frac{J_{k-1}}{j_k}+i+\frac{1}{j_k}\right)^{1-\theta}-\left(\frac{J_{k-1}}{j_k}+i-1+\frac{1}{j_k}\right)^{1-\theta}
\\&\geq\left(\frac{J_{k-1}}{j_k}+i\right)^{1-\theta}-\left(\frac{J_{k-1}}{j_k}+i-1+\frac{1}{2}\right)^{1-\theta}
\\&=i^\theta\left[\left(\frac{J_{k-1}}{j_k}+i\right)^{1-\theta}-\left(\frac{J_{k-1}}{j_k}+i-\frac{1}{2}\right)^{1-\theta}\right]w_i
\end{align*}
Applying the Mean Value Theorem to the function $x\mapsto(\phi+x)^{1-\theta}$, $\phi\in[1,\infty)$, we can find $x_\phi\in(-1/2,0)$ such that
$$
\phi^{1-\theta}-(\phi-1/2)^{1-\theta}=\frac{(1-\theta)(\phi+x_\phi)^{-\theta}}{2}
\geq\frac{(1-\theta)\phi^{-\theta}}{2}.
$$
Hence, letting $\phi=J_{k-1}/j_k+i$, we have
\begin{align*}
i^\theta\left[\left(\frac{J_{k-1}}{j_k}+i\right)^{1-\theta}-\left(\frac{J_{k-1}}{j_k}+i-\frac{1}{2}\right)^{1-\theta}\right]
&\geq i^\theta\left[\frac{(1-\theta)(J_{k-1}/j_k+i)^{-\theta}}{2}\right]
\\&=\frac{1-\theta}{2}\left(\frac{i}{J_{k-1}/j_k+i}\right)^\theta
\\&\geq\frac{1-\theta}{2}\left(\frac{1}{M+1}\right)^\theta
\end{align*}
This proves \eqref{B-constant}, and \eqref{A-constant} follows immediately from Lemma \ref{uniformly-equivalent-weights}.
\end{proof}

Taking inductively $j_1=1$ and $j_{k+1}=J_k$, the following is now immediate.

\begin{corollary}
Let $1\leq p<\infty$, $0<\theta<1$, and $\textbf{\emph{w}}=(w_n)_{n=1}^\infty=(n^{-\theta})_{n=1}^\infty\in\mathbb{W}$.  Then $d(\textbf{\emph{w}},p)$ admits a 1-complemented subspace isomorphic to $Y_{\textbf{\emph w},p}$.
\end{corollary}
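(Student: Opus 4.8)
The plan is to obtain the corollary as an immediate consequence of Theorem \ref{main-equivalent}, by producing a sequence $(j_k)_{k=1}^\infty$ for which the bounded-ratio hypothesis $J_{k-1}/j_k\leq M$ holds with an explicit $M$. First I would set $j_1=1$ and then define $j_k$ recursively by $j_{k+1}=J_k$, where, as in Lemma \ref{last-step}, $J_k=j_1+2j_2+\cdots+kj_k$ (equivalently $J_0=0$ and $J_k=J_{k-1}+kj_k$). This recursion is well posed: $j_k$ is a positive integer determined by $j_1,\dots,j_{k-1}$, and the base case $j_1=1$ keeps everything in $\mathbb{N}$. Reindexing the recursion gives $j_k=J_{k-1}$ for every $k\geq 2$, so that $J_{k-1}/j_k=1$ for all $k=2,3,4,\dots$; hence the hypothesis of Theorem \ref{main-equivalent} holds with $M=1$.

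Applying Theorem \ref{main-equivalent} with this $(j_k)$ then yields that $((d_i^{(j_k)})_{i=1}^k)_{k=1}^\infty$ is equivalent to the canonical basis of $Y_{\textbf{w},p}$. Second, I would invoke the notational convention fixed in Section 1: with $(j_k)_{k=1}^\infty$ in hand, $((d_i^{(j_k)})_{i=1}^k)_{k=1}^\infty$ is realized inside $d(\textbf{w},p)$ as a sequence of successive normalized constant-coefficient blocks of the symmetric basis $(d_n)_{n=1}^\infty$. Since every constant-coefficient block basic sequence of a symmetric basis spans a $1$-complemented subspace, the closed span $Y$ of this block sequence is $1$-complemented in $d(\textbf{w},p)$, and by the preceding sentence $Y$ is isomorphic to $Y_{\textbf{w},p}$. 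This is exactly the assertion; if desired, one may further note that $Y_{\textbf{w},p}$ then inherits from Theorem \ref{nontrivial-complemented} the properties of being non-isomorphic to $\ell_p$ and of containing no isomorph of $d(\textbf{w},p)$.

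There is essentially no obstacle to overcome: all the analytic content lives in Theorem \ref{main-equivalent} (and, underneath it, in Lemmas \ref{integral-estimate} and \ref{uniformly-equivalent-weights}, which verify \eqref{A-constant} and \eqref{B-constant}), and the recursive choice $j_{k+1}=J_k$ is engineered precisely so that the hypothesis of Theorem \ref{main-equivalent} degenerates to the trivial identity $J_{k-1}/j_k=1$. The only things needing a word of justification — that the recursion produces positive integers, and that the resulting block sequence is of constant-coefficient type so that $1$-complementation applies — are immediate from the definitions.
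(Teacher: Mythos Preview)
Your proposal is correct and follows exactly the paper's own argument: the paper likewise sets $j_1=1$, $j_{k+1}=J_k$ inductively so that $J_{k-1}/j_k=1$, applies Theorem \ref{main-equivalent}, and then invokes 1-complementation of constant-coefficient block spans. The only extraneous remark is your final aside about inheriting the properties of Theorem \ref{nontrivial-complemented}; that theorem uses blocks of length $k$ rather than $j_k$, so the inheritance is not literal (though the ``no isomorph of $d(\textbf{w},p)$'' part does follow by the same argument), but this is not needed for the corollary as stated.
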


\section{application to the lattice of closed ideals}

In \cite{KPSTT12} was shown (among other results) that the lattice of closed ideals for the operator algebra $\mathcal{L}(d(\textbf{w},p))$ can be put into a chain:
$$
\{0\}
\subsetneq\mathcal{K}(d(\textbf{w},p))
\subsetneq\mathcal{SS}(d(\textbf{w},p))
\subsetneq\mathcal{S}_{d(\textbf{w},p)}(d(\textbf{w},p))
\subsetneq\mathcal{L}(d(\textbf{w},p)).
$$
Here, $\mathcal{K}$ denotes the compact operators, $\mathcal{SS}$ the strictly singular operators, and $\mathcal{S}_{d(\textbf{w},p)}$ the ideal of operators which fail to be bounded below on any isomorph of $d(\textbf{w},p)$.  While in \cite[Corollary 2.7]{Wa20}, for the special case where $1<p<2$ and $\textbf{w}\in\mathbb{W}\cap\ell_{2/(2-p)}$, a chain of distinct closed ideals with cardinality of the continuum were identified lying between $\mathcal{K}(d(\textbf{w},p))$ and $\mathcal{SS}(d(\textbf{w},p)$, for the general case, the only distinct elements known were those of the above chain.

For an operator $T$, let $\mathcal{J}_T$ denote the class of operators factoring through $T$.  If $Z$ is any Banach space, we then set $\mathcal{J}_Z=\mathcal{J}_{Id_Z}$.  By Theorem \ref{new-ideal} below, we can extend the chain above as follows:
\begin{multline*}
\{0\}
\subsetneq\mathcal{K}(d(\textbf{w},p))
\subsetneq\mathcal{SS}(d(\textbf{w},p))
\subsetneq(\overline{\mathcal{J}_{\ell_p}}\vee\mathcal{SS})(d(\textbf{w},p))
\\\subsetneq\mathcal{S}_{d(\textbf{w},p)}(d(\textbf{w},p))
\subsetneq\mathcal{L}(d(\textbf{w},p)).
$$
\end{multline*}
Furthermore, by \cite[Corollary 3.2 and Theorem 5.3]{KPSTT12} together with the fact that $d(\textbf{w},p)$ has the approximation property, any additional distinct closed ideals in $\mathcal{L}(d(\textbf{w},p))$ must lie between $\mathcal{K}(d(\textbf{w},p))
$ and $\mathcal{SS}(d(\textbf{w},p))$, or else between $(\overline{\mathcal{J}_{\ell_p}}\vee\mathcal{SS})(d(\textbf{w},p))$ and $\mathcal{S}_{d(\textbf{w},p)}(d(\textbf{w},p))$.

To prove Theorem \ref{new-ideal}, we need a couple of preliminary results.

\begin{proposition}\label{proper-ideal}
Let $X$ and $Z$ be an infinite-dimensional Banach spaces such that $Z^2\approx Z$, and $X$ fails to be isomorphic to a complemented subspace of $Z$.  Then $\overline{\mathcal{J}_Z}(X)$ is a proper ideal in $\mathcal{L}(X)$.  Furthermore, if $P\in\mathcal{L}(X)$ is a projection with image isomorphic to $Z$, then 
$$\mathcal{J}_P(X)=\mathcal{J}_Z(X).$$
\end{proposition}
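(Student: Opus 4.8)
The plan is to verify three things in order: that $\mathcal{J}_Z(X)$ is a two-sided ideal of $\mathcal{L}(X)$, so that $\overline{\mathcal{J}_Z}(X)$ is a closed ideal; that $Id_X\notin\overline{\mathcal{J}_Z}(X)$, which gives properness; and finally the identity $\mathcal{J}_P(X)=\mathcal{J}_Z(X)$. For the ideal property, stability of $\mathcal{J}_Z(X)$ under pre- and post-composition with members of $\mathcal{L}(X)$ and under scalar multiples is immediate, so the only nonroutine point is additivity: if $T_1=B_1A_1$ and $T_2=B_2A_2$ factor through $Z$, then $T_1+T_2$ factors through $Z\oplus Z$ via $(A_1,A_2):X\to Z\oplus Z$ followed by $[B_1,B_2]:Z\oplus Z\to X$, and here the hypothesis $Z^2\approx Z$ turns this into a factorization through $Z$. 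Thus $\mathcal{J}_Z(X)$, and hence its norm closure $\overline{\mathcal{J}_Z}(X)$, is a two-sided ideal; the same additivity shows $\overline{\mathcal{J}_Z}(X)$ contains every finite-rank operator on $X$, since each rank-one operator factors through a one-dimensional (hence $1$-complemented) subspace of $Z$.

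For properness I would argue by contradiction. If $Id_X\in\overline{\mathcal{J}_Z}(X)$, there are $A:X\to Z$ and $B:Z\to X$ with $\|Id_X-BA\|<1$, so $T:=BA$ is invertible by the Neumann series; writing $S:=T^{-1}$ we get $SBA=Id_X$. Hence $A$ is bounded below, so it is an isomorphism onto its closed range $AX$, and $Q:=ASB:Z\to Z$ is a projection onto $AX$: idempotence $Q^2=A(SBA)SB=ASB=Q$ and the equalities $QZ=ASBZ\subseteq AX$ and $Ax=A(SBA)x=Q(Ax)$ all follow by substituting $SBA=Id_X$. This exhibits $AX\approx X$ as a complemented subspace of $Z$, contradicting the hypothesis, so $\overline{\mathcal{J}_Z}(X)\subsetneq\mathcal{L}(X)$.

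For the factorization identity, fix a projection $P\in\mathcal{L}(X)$ with $PX\approx Z$, write $P=jq$ where $j:PX\hookrightarrow X$ is the inclusion and $q:X\to PX$ is the corestriction of $P$ (so $qj=Id_{PX}$), and fix an isomorphism $u:Z\to PX$ with inverse $v$. Given $T=BPA\in\mathcal{J}_P(X)$, rewrite $T=BjqA=(Bju)(vqA)$ with $vqA:X\to Z$ and $Bju:Z\to X$ to see $T\in\mathcal{J}_Z(X)$. Conversely, given $T=BA$ with $A:X\to Z$ and $B:Z\to X$, put $A':=juA$ and $B':=Bvq$; then $PA'=A'$ because $A'$ has range in $PX$, and $B'PA'=B'A'=Bv(qj)uA=BvuA=BA=T$, so $T\in\mathcal{J}_P(X)$. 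Hence $\mathcal{J}_P(X)=\mathcal{J}_Z(X)$.

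I expect the only step carrying genuine content to be the properness argument, since that is exactly where the assumption that $X$ is not a complemented subspace of $Z$ is used; the rest is routine bookkeeping with factorizations, for which the hypothesis $Z^2\approx Z$ is precisely what makes the relevant classes additive.
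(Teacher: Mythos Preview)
Your argument is correct and follows essentially the same route as the paper's proof: you spell out the additivity of $\mathcal{J}_Z(X)$ where the paper simply cites \cite[Lemma 2.2]{KPSTT12}, and for properness you directly exclude $Id_X$ from the closure via a Neumann-series perturbation, whereas the paper first excludes $Id_X$ from $\mathcal{J}_Z(X)$ itself and then invokes the standard fact that the closure of a proper ideal in a unital Banach algebra is proper---but these are the same idea, since your perturbation step is precisely the content of that fact. The ``furthermore'' part is handled identically in both proofs.
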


\begin{proof}
Since $Z^2\approx Z$, \cite[Lemma 2.2]{KPSTT12} guarantees that $\mathcal{J}_Z(X)$ is an ideal in $\mathcal{L}(X)$.  Suppose towards a contradiction that that $Id_X\in\mathcal{J}_Z(X)$.  Then $Id_X=AB$ for operators $A\in\mathcal{L}(Z,X)$ and $B\in\mathcal{L}(X,Z)$.  By \cite[Lemma 2.1]{KPSTT12}, $BX$ is complemented in $Z$ and isomorphic to $X$, which contradicts our hypotheses.  It follows that $\mathcal{J}_Z(X)$ is a proper ideal in $\mathcal{L}(X)$.  Recall that the closure of a proper ideal in a unital Banach algebra is again proper; in particular, $\overline{\mathcal{J}_Z}(X)$ is a proper ideal in $\mathcal{L}(X)$.

To prove the ``furthermore'' part, assume $A\in\mathcal{L}(Z,X)$ and $B\in\mathcal{L}(X,Z)$. Let $Q:Z\to X$ be the canonical embedding so that $PQ=Id_Z$ and hence $AB=APQB\in\mathcal{J}_P(X)$.  It follows that $\mathcal{J}_Z(X)\subseteq\mathcal{J}_P(X)$, and the reverse inclusion is even more obvious.
\end{proof}

For the next result, $\mathcal{F}$ denotes the class of finite-rank operators and $\mathcal{E}$ the class of inessential operators.  Recall also that a basis $\mathcal{B}$ is called {\it semispreading} whenever every subsequence of $\mathcal{B}$ is dominated by $\mathcal{B}$ itself.  In particular, the unit vector basis of $\ell_p$ is semispreading.

\begin{proposition}[{\cite[Corollary 3.8]{LLR04}}]\label{factorable-ops}
Let $Z$ be a Banach space with a semi-spreading basis $(z_n)$, and let $X$ be a Banach space with basis $(x_n)$ such that any seminormalized block sequence of $(x_n)$ contains a subsequence equivalent to $(z_n)$ and spanning a complemented subspace of $X$.  Then
$$
\{0\}\subsetneq\overline{\mathcal{F}}(X)=\mathcal{K}(X)=\mathcal{SS}(X)=\mathcal{E}(X)\subsetneq\overline{\mathcal{J}_Z}(X),
$$
and any additional distinct closed ideals must lie between $\overline{\mathcal{J}_Z}(X)$ and $\mathcal{L}(X)$.
\end{proposition}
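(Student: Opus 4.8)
The plan is to show that, under these hypotheses, $\overline{\mathcal{J}_Z}(X)$ sits in the lattice of closed ideals of $\mathcal{L}(X)$ as the immediate successor of the compact operators. Several inclusions are automatic and I would dispose of them first. As $X$ has a basis it has the approximation property, so $\overline{\mathcal{F}}(X)=\mathcal{K}(X)$, and clearly $\{0\}\subsetneq\overline{\mathcal{F}}(X)$; also $\mathcal{K}(X)\subseteq\mathcal{SS}(X)\subseteq\mathcal{E}(X)$ in any Banach space. Next, $\mathcal{F}(X)\subseteq\mathcal{J}_Z(X)$, since the range of a finite-rank operator lies in a finite-dimensional --- hence complemented --- subspace $F$ of $X$, and $F$ embeds complementably into $Z$ by extending the coordinate functionals of a basis of $F$ via Hahn--Banach; taking closures gives $\mathcal{K}(X)\subseteq\overline{\mathcal{J}_Z}(X)$. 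Finally, every nonzero closed ideal $I$ contains $\mathcal{F}(X)$: if $0\ne T\in I$ and $Tx_0\ne 0$, an arbitrary rank-one operator can be written $A\circ T\circ B$ with $A,B$ rank-one operators built from $x_0$, a functional on $X$, and (via Hahn--Banach) a functional $h$ with $h(Tx_0)=1$; hence $I\supseteq\mathcal{K}(X)$, so $\mathcal{K}(X)$ is the minimal nonzero closed ideal. It therefore remains to analyse noncompact operators on $X$.

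The crux is an extraction lemma: if $T\in\mathcal{L}(X)$ is not compact, there is an infinite-dimensional $W\subseteq X$, isomorphic to $Z=[z_n]$ and complemented in $X$, with $T|_W$ an isomorphism onto a complemented subspace $TW$ of $X$. To prove it, first note that $\|T|_{[x_n]_{n>N}}\|$ cannot tend to $0$ as $N\to\infty$ --- otherwise $T$ would be a norm-limit of finite-rank operators --- so one can choose a normalized block basic sequence $(y_j)$ of $(x_n)$ with $\inf_j\|Ty_j\|>0$, hence $(Ty_j)$ seminormalized. To feed $(Ty_j)$ to the hypothesis one wants it to be (close to) a block basic sequence of $(x_n)$ as well: this is immediate when $(x_n)$ is shrinking (block sequences are then weakly null, and one perturbs via the Bessaga--Pe\l{}czy\'{n}ski selection principle), and in general is arranged by applying Rosenthal's $\ell_1$-theorem to $(Ty_j)$, passing to the weakly Cauchy alternative and replacing $(Ty_j)$ by consecutive differences --- the $\ell_1$-alternative requiring a separate, more delicate treatment. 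Once $(y_j)$ and a block basic sequence $(v_l)$ with $Ty_{j_l}$ close to $v_l$ are in hand, I apply the hypothesis to each: both admit subsequences equivalent to $(z_n)$ and spanning complemented subspaces, and --- this is where \emph{semispreading} is used --- the relevant subsequences of $(z_n)$ dominate one another in the direction that forces $T|_W$ to be bounded below on $W:=[y_{j_{l_m}}]$. Iterating the hypothesis on the block subsequences that actually occur then lets one also insist that $W$ and (after a last perturbation) $TW$ be complemented in $X$.

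Granting the extraction lemma, the rest is formal. Given noncompact $T$, build $W$, and let $P$ project $X$ onto $W$ and $Q$ project $X$ onto $TW$; set $S=(T|_W)^{-1}Q\in\mathcal{L}(X)$, so that $STw=w$ for $w\in W$. Then $W\subseteq\ker(Id_X-ST)$, so $Id_X-ST$ is not Fredholm and $T\notin\mathcal{E}(X)$ --- and already $T\notin\mathcal{SS}(X)$ since $T|_W$ is an isomorphism; combined with the automatic inclusions, $\mathcal{K}(X)=\mathcal{SS}(X)=\mathcal{E}(X)$. Applying the hypothesis to any normalized block sequence of $(x_n)$ produces a complemented copy of $Z$ in $X$, whose associated projection is noncompact and factors through $Z$; hence $\mathcal{K}(X)\subsetneq\overline{\mathcal{J}_Z}(X)$. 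Finally, if a closed ideal $I$ is not contained in $\mathcal{K}(X)$, pick noncompact $T\in I$ and build $W,P,S$ as above; then $P=PSTP\in I$, and since $P$ is a projection onto a copy of $Z$, a direct factorization argument (as in the proof of Proposition \ref{proper-ideal}) gives $\mathcal{J}_Z(X)\subseteq I$, hence $\overline{\mathcal{J}_Z}(X)\subseteq I$. Thus the only closed ideals contained in $\overline{\mathcal{J}_Z}(X)$ are $\{0\}$, $\mathcal{K}(X)$, and $\overline{\mathcal{J}_Z}(X)$ itself, while every other closed ideal of $\mathcal{L}(X)$ contains $\overline{\mathcal{J}_Z}(X)$ properly, i.e.\ lies between $\overline{\mathcal{J}_Z}(X)$ and $\mathcal{L}(X)$.

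The main obstacle is the extraction lemma, and specifically the reduction of an arbitrary noncompact operator to a genuine block-basis configuration to which the hypothesis applies: this forces one through Rosenthal's dichotomy --- with the attendant care in the non-weakly-null / $\ell_1$ case, which is only routine when $(x_n)$ is shrinking (as in the intended applications) --- and through several rounds of passing to subsequences and re-invoking the hypothesis so that $W$, $T|_W$, and $TW$ simultaneously acquire the required properties. Everything downstream of that lemma is routine ideal-theoretic bookkeeping together with elementary Fredholm theory.
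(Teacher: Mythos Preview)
The paper does not prove this proposition at all: it is quoted verbatim from \cite[Corollary 3.8]{LLR04} and used as a black box in the proof of Theorem \ref{new-ideal}. There is therefore no ``paper's own proof'' to compare your attempt against.

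That said, your sketch is a faithful outline of the argument that actually appears in \cite{LLR04}. The easy inclusions, the minimality of $\mathcal{K}(X)$, and the ideal-theoretic bookkeeping downstream of the extraction lemma are all handled correctly. Your extraction lemma is precisely the engine of the Laustsen--Loy--Read proof: from a noncompact $T$ one manufactures a seminormalized block sequence $(y_j)$ with $(Ty_j)$ seminormalized and (after perturbation) block basic, then invokes the hypothesis on both sequences and uses the semispreading property of $(z_n)$ to force $T$ to be bounded below on a complemented copy of $Z$ with complemented image. You are right that the delicate point is getting $(Ty_j)$ to be essentially block basic when $(x_n)$ is not shrinking, and your honest flagging of the $\ell_1$ branch of Rosenthal's dichotomy as ``requiring a separate, more delicate treatment'' is appropriate; in \cite{LLR04} this is handled, and for the application in the present paper the basis of $Y$ is in any case weakly null (being constant-coefficient blocks of the weakly null basis $(d_n)$), so the shrinking case suffices.
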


In the proof of what follows, we use the fact that if $\mathcal{I}$ and $\mathcal{J}$ are ideals in $\mathcal{L}(X)$, then $\overline{\mathcal{I}}\vee\overline{\mathcal{J}}=\overline{\mathcal{I}+\mathcal{J}}$.

\begin{theorem}\label{new-ideal}
Fix $1\leq p<\infty$ and $\textbf{\emph{w}}\in\mathbb{W}$.  Let $Y$ be as in Theorem \ref{nontrivial-complemented}, and $P_Y\in\mathcal{L}(d(\textbf{\emph{w}},p))$ any continuous linear projection onto $Y$. Then $$P_Y\in\mathcal{S}_{d(\textbf{\emph{w}},p)}(d(\textbf{\emph{w}},p))\setminus(\overline{\mathcal{J}_{\ell_p}}\vee\mathcal{SS})(d(\textbf{\emph{w}},p)).$$
\end{theorem}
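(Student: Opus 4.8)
The plan is to establish the two memberships separately. First I would show $P_Y \in \mathcal{S}_{d(\mathbf{w},p)}(d(\mathbf{w},p))$, i.e.\ that $P_Y$ is not bounded below on any isomorph of $d(\mathbf{w},p)$. Suppose $Z \subseteq d(\mathbf{w},p)$ is isomorphic to $d(\mathbf{w},p)$ and $P_Y$ is bounded below on $Z$; then $P_Y|_Z$ is an isomorphism onto $P_Y Z \subseteq Y$, so $Y$ contains an isomorph of $d(\mathbf{w},p)$, contradicting Theorem~\ref{nontrivial-complemented}. Hence $P_Y \in \mathcal{S}_{d(\mathbf{w},p)}(d(\mathbf{w},p))$.

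The substantive part is showing $P_Y \notin (\overline{\mathcal{J}_{\ell_p}} \vee \mathcal{SS})(d(\mathbf{w},p))$. Using the identity $\overline{\mathcal{I}} \vee \overline{\mathcal{J}} = \overline{\mathcal{I}+\mathcal{J}}$, this ideal equals $\overline{\mathcal{J}_{\ell_p} + \mathcal{SS}}(d(\mathbf{w},p))$. I would argue by contradiction: suppose $P_Y$ lies in this closure. The key observation is that $Y$ itself satisfies the hypotheses of Proposition~\ref{factorable-ops} with $Z = \ell_p$: its basis $((d_i^{(k)})_{i=1}^{N_k})_{k=1}^\infty$ is a constant-coefficient block basis, every seminormalized block sequence of it is (by symmetry and the gliding-hump argument used in the proof of Theorem~\ref{nontrivial-complemented}) a block basis of $(d_n)$ with coefficients tending to zero, so by Theorem~\ref{ACL-subsequence-lemma} it has a subsequence equivalent to $\ell_p$ and complemented in $d(\mathbf{w},p)$, hence complemented in $Y$. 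Therefore Proposition~\ref{factorable-ops} applies to $X = Y$, giving $\mathcal{SS}(Y) \subsetneq \overline{\mathcal{J}_{\ell_p}}(Y)$ and that $\overline{\mathcal{J}_{\ell_p}}(Y)$ is a proper ideal in $\mathcal{L}(Y)$ (equivalently, $\ell_p$ is not isomorphic to $Y$, which we already know, and Proposition~\ref{proper-ideal} applies since $\ell_p^2 \approx \ell_p$).

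Now I transfer the contradiction from $d(\mathbf{w},p)$ to $Y$. Let $Q: Y \to d(\mathbf{w},p)$ be the inclusion, so $P_Y = Q \, \widetilde{P}_Y$ where $\widetilde{P}_Y : d(\mathbf{w},p) \to Y$ is $P_Y$ with restricted codomain, and $\widetilde{P}_Y Q = \mathrm{Id}_Y$. If $P_Y = \lim_n (A_n + S_n)$ with $A_n \in \mathcal{J}_{\ell_p}(d(\mathbf{w},p))$ and $S_n \in \mathcal{SS}(d(\mathbf{w},p))$, then $\mathrm{Id}_Y = \widetilde{P}_Y P_Y Q = \lim_n(\widetilde{P}_Y A_n Q + \widetilde{P}_Y S_n Q)$. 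Here $\widetilde{P}_Y A_n Q \in \mathcal{J}_{\ell_p}(Y)$ (factoring through $\ell_p$ is preserved by composition on either side), and $\widetilde{P}_Y S_n Q \in \mathcal{SS}(Y)$ (strict singularity is likewise an ideal property). Thus $\mathrm{Id}_Y \in \overline{\mathcal{J}_{\ell_p}(Y) + \mathcal{SS}(Y)} = \overline{\mathcal{J}_{\ell_p}}(Y) \vee \mathcal{SS}(Y) = \overline{\mathcal{J}_{\ell_p}}(Y)$, where the last equality uses $\mathcal{SS}(Y) \subseteq \overline{\mathcal{J}_{\ell_p}}(Y)$ from Proposition~\ref{factorable-ops}. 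But $\overline{\mathcal{J}_{\ell_p}}(Y)$ is proper, so it cannot contain $\mathrm{Id}_Y$ — a contradiction. Therefore $P_Y \notin (\overline{\mathcal{J}_{\ell_p}} \vee \mathcal{SS})(d(\mathbf{w},p))$, completing the proof.

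The main obstacle I anticipate is the bookkeeping in verifying that $Y$ genuinely fits the hypotheses of Proposition~\ref{factorable-ops} — specifically that $\ell_p$ is semispreading (immediate for the unit vector basis) and that the complemented-subsequence condition holds with the constant $\ell_p$-target rather than varying targets; this rests on the constant-coefficient block structure and Theorem~\ref{ACL-subsequence-lemma}, and on noting that a complemented subspace of a complemented subspace is complemented in the whole. Everything else is a routine ideal-theoretic manipulation.
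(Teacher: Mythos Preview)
Your proposal is correct and follows essentially the same route as the paper's proof: both arguments dispose of the $\mathcal{S}_{d(\mathbf{w},p)}$ membership trivially via Theorem~\ref{nontrivial-complemented}, then transfer the problem to $\mathcal{L}(Y)$ by composing with the inclusion and projection, invoke Proposition~\ref{factorable-ops} to absorb $\mathcal{SS}(Y)$ into $\overline{\mathcal{J}_{\ell_p}}(Y)$, and finish with the properness of $\overline{\mathcal{J}_{\ell_p}}(Y)$ from Proposition~\ref{proper-ideal}. The only minor differences are cosmetic (the paper routes through a concrete projection $P_{\ell_p}$ and cites \cite{CL74} separately for complementation of the $\ell_p$-subsequence, whereas you invoke Theorem~\ref{ACL-subsequence-lemma} directly, which as stated already includes complementation); your mention of a gliding-hump argument is unnecessary here since a block sequence of $\mathcal{B}_Y$ is automatically a block sequence of $(d_n)$ with coefficients tending to zero.
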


\begin{proof}
Let $P_{\ell_p}\in\mathcal{L}(d(\textbf{w},p))$ be any projection onto an  isomorophic copy of $\ell_p$ spanned by basis vectors of $Y$.  (Such a copy exists by Theorem \ref{ACL-subsequence-lemma}.)  By Theorem \ref{nontrivial-complemented}, $Y$ contains no isomorph of $d(\textbf{w},p)$ and hence $P_Y\in\mathcal{S}_{d(\textbf{w},p)}(d(\textbf{w},p))$.  Since $\mathcal{S}_{d(\textbf{w},p)}(d(\textbf{w},p))$ is the unique maximal ideal in $\mathcal{L}(d(\textbf{w},p))$, and $\mathcal{J}_{P_{\ell_p}}(d(\textbf{w},p))=\mathcal{J}_{\ell_p}(d(\textbf{w},p))$ by Proposition \ref{proper-ideal}, it's sufficient to prove that
$P_Y\notin(\overline{\mathcal{J}_{P_{\ell_p}}}\vee\mathcal{SS})(d(\textbf{w},p)).$

Next, we claim that  $P_Y\in(\overline{\mathcal{J}_{P_{\ell_p}}}\vee\mathcal{SS})(d(\textbf{w},p))$ only if $Id_Y\in(\overline{\mathcal{J}_{\ell_p}}\vee\mathcal{SS})(Y)$.  To prove it, fix $\epsilon>0$, and suppose there are $A,B\in\mathcal{L}(d(\textbf{w},p))$ and $S\in\mathcal{SS}(d(\textbf{w},p))$ such that
$$
\left\|AP_{\ell_p}B+S-P_Y\right\|<\epsilon.
$$
Let $J_Y:Y\to d(\textbf{w},p)$ be an embedding satisfying $P_YJ_Y=J_Y$, or $P_YJ_Y=Id_Y$ when viewed as an operator in $\mathcal{L}(Y)$.  Composing $P_Y$ on the left and $J_Y$ on the right, we have
$$
\left\|P_YAP_{\ell_p}BJ_Y+P_YSJ_Y-Id_Y\right\|_{\mathcal{L}(Y)}<\|P_Y\|\cdot\epsilon\cdot\|J_Y\|.
$$
On the other hand, since $AP_{\ell_p}=A|_YP_{\ell_p}$ and $P_{\ell_p}=P_{\ell_p}P_Y$, we have
$$
P_YAP_{\ell_p}BJ_Y
=(P_YA|_Y)P_{\ell_p}(P_YBJ_Y)
$$
and hence
$$
\left\|(P_YA|_Y)P_{\ell_p}(P_YBJ_Y)+P_YSJ_Y-Id_Y\right\|_{\mathcal{L}(Y)}<\|P_Y\|\cdot\epsilon\cdot\|J_Y\|.
$$
Since $\mathcal{J}_{\ell_p}(Y)=\mathcal{J}_{P_{\ell_p}}(Y)$ by Proposition \ref{proper-ideal}, where $P_{\ell_p}$ is likewise viewed as an operator in $\mathcal{L}(Y)$, from the above together with the ideal property of $\mathcal{SS}$, the claim follows.

Let $\mathcal{B}_Y=((d_i^{(k)})_{i=1}^{N_k})_{k=1}^\infty$ denote the canonical basis of $Y$ from Theorem \ref{nontrivial-complemented}.  Note that since $\mathcal{B}_Y$ is made up of constant coefficient blocks of $(d_n)$ of increasing length, any seminormalized blocks of $\mathcal{B}_Y$ will contain a subsequence equivalent to $\ell_p$  by Theorem \ref{ACL-subsequence-lemma}.  In fact, in \cite[Lemma 15]{CL74} this result was refined to show that we can choose that subsequence to span a complemented subspace of $d(\textbf{w},p)$, and hence of $Y$ itself.  We can therefore apply Theorem \ref{factorable-ops} to conclude that $\mathcal{SS}(Y)\subset\overline{\mathcal{J}_{\ell_p}}(Y)$.  Meanwhile, again by Proposition \ref{proper-ideal}, $\overline{\mathcal{J}_{\ell_p}}(Y)$ is a proper ideal in $\mathcal{L}(Y)$, which means $Id_Y\notin\overline{\mathcal{J}_{\ell_p}}(Y)$.  Hence, $P_Y\notin(\overline{\mathcal{J}_{P_{\ell_p}}}\vee\mathcal{SS})(d(\textbf{w},p))$ as desired.
\end{proof}

\end{document}